





\documentclass[10pt,a4paper]{article} 

\usepackage{anysize}
\usepackage[format = hang]{caption}
\captionsetup{font = footnotesize}

\usepackage{amsmath,amsthm,verbatim,amssymb,amsfonts,amscd, graphicx}
\usepackage{graphics,natbib}
\usepackage{titlesec,footmisc}
\usepackage{hyperref} 
\topmargin0.0cm
\headheight0.0cm
\headsep0.0cm
\oddsidemargin0.0cm 
\textheight24.0cm 
\textwidth14.cm 
\footskip1.0cm
\theoremstyle{plain}
\newtheorem{theorem}{Theorem}

\newtheorem{lemma}{Lemma}

\newtheorem{example}{Example}[section]
\theoremstyle{definition}

\newcommand{\cov}{\text{cov}}
\newcommand{\corr}{\text{corr}}

\setlength\arraycolsep{2pt}

\def\sumip{\sum_{i=1}^p}
\def\E{{\rm E}}
\def\Var{{\rm Var}}
\def\tr{{\rm t}}
\def\midd{\,|\,}
\def\pr{{\rm pr}}
\def\dd{{\rm d}}
\def\hatt{\widehat}
\def\arr{\rightarrow}
\def\N{{\rm N}}
\def\half{\hbox{$1\over 2$}}
\def\eps{\varepsilon}
\def\Gam{{\rm Gamma}}

\def\Dir{{\rm Dir}}
\def\data{{\rm data}}

\def\beq{\begin{eqnarray}}
\def\eeq{\end{eqnarray}}

\def\beqn{\begin{eqnarray*}}  
\def\eeqn{\end{eqnarray*}}

\titleformat{\section}{\normalfont\large\sc\centering}{\thesection}{1em}{}
\titleformat{\subsection}[runin]{\normalfont\large\bfseries}{\thesubsection}{1em}{}
\numberwithin{equation}{section} 
\renewenvironment{abstract}
               {\list{}{\rightmargin\leftmargin}%
                \item[\text{\hspace{10mm}\sc Abstract.}]\relax}
               {\endlist}



\begin{document}

\def\today{April 2019}


\begingroup
\begin{centering} 
\large{\sc\bf Multivariate Estimation of Poisson Parameters}\\[0.8em]
\small {\sc Emil Aas Stoltenberg$^1$ 
   and Nils Lid Hjort$^2$} \\[0.3em] 
\small {\rm $^1$emilas@math.uio.no and $^2$nils@math.uio.no} \\[0.3em] 
\small {\sc Department of Mathematics, University of Oslo} \\[0.3em]
\small {\sc P.B.~1053, Blindern, N-0316 Oslo, Norway} \\[0.8em]
\small {\sc {\today}}\par
\end{centering}
\endgroup


\begin{abstract}
\small{This paper is devoted to the multivariate estimation 
of a vector of Poisson means. 
A novel loss function that penalises bad estimates of each 
of the parameters \textit{and} the sum (or equivalently the mean) 
of the parameters is introduced. Under this loss function, 
a class of minimax estimators that uniformly dominate 
the maximum likelihood estimator is derived. Crucially, 
these methods have the property that for estimating
a given component parameter, the full data vector is 
utilised. Estimators in this class can be fine-tuned to 
limit shrinkage away from the maximum likelihood estimator, 
thereby avoiding implausible estimates of the sum of the 
parameters. Further light is shed on this new class 
of estimators by showing that it can be derived by 
Bayesian and empirical Bayesian methods. In particular,
we exhibit a generalisation of the Clevenson--Zidek
estimator, and prove its admissibility. Moreover, 
a class of prior distributions for which the Bayes 
estimators uniformly dominate the maximum likelihood 
estimator under the new loss function is derived. 
A section is included involving weighted loss functions,
notably also leading to a procedure improving uniformly
on the maximum likelihood method in an infinite-dimensional
setup. Importantly, some of our methods lead to constructions
of new multivariate models for both rate parameters and 
count observations. Finally, estimators that shrink 
the usual estimators towards a data based point 
in the parameter space are derived and compared.

\noindent
{\it Key words:}
admissibility; Bayes and empirical Bayes; 
   minimax; Poisson; shrinkage; multivariate estimation
}
\end{abstract}



\section{Introduction}
\label{section:intro}

Let $Y = (Y_1,\ldots,Y_p)$ be a vector of 
independent Poisson random variables with mean vector 
$\theta = (\theta_1,\ldots,\theta_p)$. In this article 
we consider the problem of estimating the vector $\theta$. 
The obvious estimator is $\delta_0(Y) = Y$, that is, 
using $\delta_{0,i} = Y_i$ for each of the $p$ individual 
parameters. It is well known that $\delta_0$ is the 
maximum likelihood solution, that it has components 
with uniformly minimal variance among the unbiased 
estimators, and that it is admissible in the 
one-dimensional problem under squared error loss 
as well as under its weighted version, 
see e.g.~\citet[p.~277]{Lehmann1983}. In the simultaneous 
or multivariate problem, however, \citet{Peng1975} and 
\citet{CZ1975} were the first to show that $\delta_0$ 
\textit{can} be improved upon under the loss functions 
\beq
\label{eq:twolossfunctions}
L_0^*(\theta,\delta) = \sumip(\delta_i - \theta_i)^2
\quad{\rm and}\quad 
L_1^*(\theta,\delta) = \sumip(\delta_i - \theta_i)^2/\theta_i, 
\eeq 
if $p \geq 3$ and $p \geq 2$, respectively. 
In particular, for the $L_1^*$ loss, \citet{CZ1975} 
derived the estimator 
\beq
\label{eq:czestimator}
\delta_{\text{CZ},i}(Y) = \Bigl(1 - \frac{p-1}{p-1+Z} \Bigr)Y_i,
   \quad {\rm where\ }Z=\sumip Y_i, 
\eeq
demonstrating that it improves uniformly on 
the standard procedure $\delta_0$. 
This Stein-type phenomenon has also been observed for other 
loss functions. \citet{Hwang1982} obtained results for
$\sumip(\delta_i - \theta)^2/\theta_i^{m_i}$,  
for fixed integers $m_i$. \citet{GhoshYang1988} considered 
a loss function based on the entropy or Kullback--Leibler 
distance 
$L_e(\theta,\delta)=\sumip \theta_i
   \{\delta_i/\theta_i-\log(\delta_i/\theta_i)-1\}$,  
A good overview article for admissibility issues, 
for multivariate Poisson means and for other models 
for discrete data, is \cite*{GhoshHwangTsui1983}, 
followed by discussion contributions by 
\citet{Berger1983}, \citet{Morris1983b}, \citet{Hudson1983}. 
A more recent article on these issues is 
\cite*{BrownGreenshteinRitov2013}. 

More broadly, the books \citet{Efron2013} and 
\citet*{Shrinkage2018} contribute to seeing multivariate 
estimation, using shrinkage and empirical Bayes methods, 
as as a global phenomenon. Central themes are variations
on `borrowing strength', either via direct constructions
or in empirical Bayes setups. Our article is a contribution
in these general directions, showing that methods
developed for the multinormal and more generally
spherically symmetric distributions have certain
parallels in the world of multivariate Poisson 
estimation. Importantly, when estimating a particular
Poisson parameter $\theta_i$, the full multivariate
vector of data is being used, as an integral part 
of our methods. 

The estimators that have been found to be better than 
$\delta_0(Y) = Y$ in these earlier publications 
are essentially all of the shrinking type, pushing the
maximum likelihood estimator closer to the bottom corner 
of the parameter space. A good example of the merits of 
this type of shrinkage is provided by \citet{CZ1975}, 
wherein 36 small $\theta_i$ -- intensities of an oil-well 
discovery process -- are estimated with the estimator in 
\eqref{eq:czestimator}. 
\citet{CZ1975} had access to `known' $\theta_i$ 
and could check the actual loss incurred. The procedure 
in \eqref{eq:czestimator} did indeed give much smaller loss 
$\sum_{i=1}^{36}(\delta_{\text{CZ},i} - \theta_i)^2/\theta_i$ 
than did $\delta_0$; the losses are 14.33 and 39.26,
respectively. However, another and perhaps not so 
pleasant feature of their estimation procedure is 
conspicuous here, namely that the sum 
$\gamma = \sum_{i=1}^{36}\theta_i$ (or equivalently the 
mean $\bar{\theta} = \gamma/36$) is seriously underestimated. 
In their oil-well discovery example the true sum is 
$\gamma = 26.98$, the usual $\delta_0$ has 
$\sum_{i=1}^{36} y_i = 29$, while their 
$\sum_{i=1}^{36}\delta_{\text{CZ},i} = 12.97$ is much too low. 

In some situations the sum (or mean) is unimportant and 
all that matters is to estimate each $\theta_i$. In 
many multiparameter cases on wants to keep track of 
the sum (or the mean) of the $\theta_i$ as well, however, 
as one surely would in the oil-well discovery process 
above. Other multiparameter cases where the sum 
(or the mean) are deemed as important as the individual 
$\theta_i$ abound; think for example of a decision maker 
having to make budgetary decisions concerning each of 
the boroughs of a city \textit{and} the city as a whole, 
the resources allocated to each of $p$ hospitals and 
the whole health budget, etc. 

These considerations motivate studying loss functions 
that take into account the need for good individual 
estimates, while at the same time guarding against 
the underestimation of the sum. One example of 
such a loss function is
\beq
\label{eq:quadLoss1}
L(\theta,\delta) = \sumip (\delta_i - \theta_i)^2 
  + c \Bigl( \sumip\delta_i - \gamma\Bigr)^2
 = L_0^*(\theta,\delta)+ cp^2 (\bar{\delta} - \bar{\theta})^2,
\eeq
Since $\sumip\delta_{0,i}=\sumip Y_i$ is the admissible minimax 
solution under the loss function $\big(\sumip\delta_i - \gamma\big)^2$ 
one might wonder whether the extra penalisation above would 
secure admissibility of $\delta_0$ in the simultaneous problem. 
In Section \ref{section:quadLoss} we show that this is not 
the case; $\delta_0$ is again inadmissible when $p\geq 3$, 
under any given quadratic form loss function 
$(\delta-\theta)^\tr  A (\delta-\theta)$. Another loss function 
that takes the guarding against underestimation of the 
sum (or the mean) of the parameters into account is the 
weighted version of \eqref{eq:quadLoss1}, that is, 
the loss functions that generalises the one used 
by \citet{CZ1975}, namely
\beq
\label{eq:closs}
\begin{split}
L_c(\theta,\delta) 
   = \sumip\frac{(\delta_i - \theta_i )^2}{\theta_i} 
   +c{(\sumip\delta_i-\sumip\theta_i)^2\over \sumip\theta_i}
 = L_1^*(\theta,\delta) 
   + cp \frac{(\bar{\delta} - \bar{\theta})^2}{\bar{\theta}},
\end{split}
\eeq
where $c$ is a user-defined constant. Under \eqref{eq:closs}, 
the loss in Clevenson and Zidek's example for $\delta_0$ 
is $39.26 + 0.35\,c$ while the loss for $\delta_{\text{CZ}}$ 
is $14.33 + 6.35\,c$. Again, one might wonder whether 
$\delta_0$ is admissible for certain values of $c$. 
In Section \ref{section:closs} we show that this is not the case, 
and derive an estimator which is the natural generalisation 
of the one in \eqref{eq:czestimator}, 
\beq
\label{eq:emilnilsestimator}
\delta_i^c(Y) = \Bigl\{1 - \frac{p-1}{p-1+(1+c)Z}\Bigr\}\,Y_i
   \quad {\rm for\ }i=1,\ldots,p, 
\eeq  
which is shown to belong to a class of estimators that dominate 
$\delta_0$ uniformly over the parameter space. 
In the somewhat similar multivariate normal setting, 
investigations on how to limit the shrinkage of the 
James--Stein estimator, in order to account for objectives 
related to estimation the means, have been conducted by 
\citet{EfronMorris1971,EfronMorris1972}. 

Our paper proceeds as follows. In Section \ref{section:quadLoss} 
we study the loss function \eqref{eq:quadLoss1} and derive 
a class of estimators that uniformly dominate the maximum 
likelihood estimator. Section \ref{section:closs} concerns 
the weighted version of this loss function, the $L_c$ 
loss function of (\ref{eq:closs}), and we derive the already 
mentioned generalisation $\delta^c$ of $\delta_{\rm CZ}$. 
In Section \ref{section:Bayes} further light is shed 
on this new class of estimators by showing that it 
can be derived by Bayesian and empirical Bayesian methods. 
We are also able to prove that the estimator $\delta^c$ 
of (\ref{eq:emilnilsestimator}) is admissible. 
Classes of alternative estimators are then studied in 
Sections \ref{section:smoothtomean} and 
\ref{section:moreempiricalbayes}, involving 
Bayes and empirical Bayes strategies which 
shrink the raw data towards data-driven structures
for the $\theta_i$, such as the data mean, 
or a submodel. Some of these procedures succeed 
in having risks well below the minimax benchmark, 
in large regions of the parameter space, though without
achieving uniform dominance. In Section \ref{section:weightedloss} 
weighted loss functions are studied, which admit 
relative weights of importance; notably, an infinite-dimensional
setup is also included. Then in Section \ref{section:extra} 
we demonstrate how some of our Bayesian constructions 
also lead to new multivariate models for rate parameters 
and for count observations, of interest in their own right, 
pointing to models for spatially dependent count data. 
Finally Section \ref{section:concluding} 
offers a list of concluding remarks. 
 
\section{General quadratic loss function} 
\label{section:quadLoss}

We may write the loss function of \eqref{eq:quadLoss1} 
as $L(\delta,\theta) = (\delta - \theta)^\tr A_0(\delta - \theta)$, 
where $A_0$ is the matrix with $1+c$ down its diagonal 
and $c$ elsewhere. The natural generalisation is
\beq
L(\delta,\theta) = (\delta - \theta)^\tr A(\delta - \theta),
\label{eq:quadLoss1.r}
\eeq
where $A$ is symmetric and positive definite. Below we obtain 
some results for general $A$ and apply these to two examples. 
Note that in situations where there is no ordering of the 
individual $\theta_i$ and no reason to estimate some of 
them more precisely than the others, the loss function 
in \eqref{eq:quadLoss1}, that is, using $A_0$ with an 
appropriate choice of $c$, is the natural choice. Our 
method of proving inadmissibility of $\delta_0$ resembles 
that of \citet{TsuiPress1982} and \citet{Hwang1982}, 
where $A$ is diagonal. 

Let $\delta(y) = y - \phi(y)$ be a competitor. The difference 
in risk between these two estimators is then 
\beq
\begin{split}
R(\delta,\theta) - R(\delta_0,\theta) 
& = \E_\theta\,\{ (\delta - \theta)^\tr A(\delta - \theta ) 
   - (Y - \theta)^\tr A (Y - \theta)\} \\
& = \E_\theta\,\{ -2(Y - \theta) A \phi(Y) + \phi(Y)^\tr A\phi(Y)\} \\
& = \E_\theta\,\{ -2(Y - \theta)\psi(Y) + \psi(Y)^\tr A^{-1}\psi(Y)\},
\end{split}
\label{eq:riskdiff1}
\eeq
writing $\psi(y) = A \phi(y)$. Furthermore, 
since $\E_\theta\,\theta_i g(Y) = \E_\theta\,Y_i g(Y - e_i)$ for 
any $g$ with $\E_\theta\,|g(Y)|<\infty$, using $e_i$ to 
denote the unit vector with the $i$th element equal to one, 
we have that  
\beqn
\E_\theta\,(Y_i - \theta_i) \psi(Y) 
   = \E_\theta\,Y_i\{\psi(Y) - \psi(Y - e_i) \},
\eeqn 
and the risk difference \eqref{eq:riskdiff1} can be 
written $\E_\theta\,D(Y,\phi)$, in which 
\beqn
D(y,\phi) = -\sumip 2y_i\{ \psi(y) - \psi(y - e_i) \} 
   + \psi(y)^\tr A^{-1}\psi(y).
\eeqn 
If a function $\psi(y)$ can be found such that $D(y,\phi)$ 
is non-positive for all $y$, with strict inequality 
for at least one datum $y$, then $\delta_0 = Y$ is 
inadmissible, being outperformed by $\delta(y) = y - A^{-1}\psi(y)$.  

\begin{theorem}
\label{Th:quadloss}
Let $A$ be symmetric and positive definite. Then $\delta_0 = Y$ 
is inadmissible under loss function \eqref{eq:quadLoss1.r}, 
if $p\geq 3$. It is dominated by $\delta(Y) = Y - A^{-1}\psi(Y)$, 
where $\psi(\cdot)$ is any member of the following class 
\beqn
\psi_i(y) = \frac{d(y)}{B(y)}T(y_i), \quad \text{for\ }i = 1,\ldots p,
\eeqn 
where $T(0) = 0$ and $T(y) = \sum_{j\leq y} 1/j$ for $y \geq 1$, 
where $B(y) = \sumip T(y_i)T(y_i +1)$, and $d(y)$ is nondecreasing 
in each argument and obeying 
\beqn
0 \leq d(y) \leq (2/M)\{N(y) - 2  \}_{+},
\eeqn
writing $a_{+} =\max(a,0)$ for truncating to zero.  
Here $N(y) = \sumip I\{y_i \leq 1\}$ and $M$ 
is the inverse of the smallest eigenvalue of $A$. 
\end{theorem}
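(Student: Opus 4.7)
The plan is to verify the sufficient condition identified in the paragraph preceding the statement: produce a $\psi$ for which $D(y, \phi) \leq 0$ at every $y \in \mathbb{N}^p$, with strict inequality at some $y^\ast$ occurring with positive Poisson probability. Combined with the positivity of Poisson probabilities on the whole lattice, this will give $\Exp_\theta D(Y, \phi) < 0$ for all $\theta \in (0,\infty)^p$, and hence $\delta(Y) = Y - A^{-1}\psi(Y)$ uniformly dominates $\delta_0$.

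My first move would be to substitute $\psi_i(y) = h(y)\,T(y_i)$, with $h(y) = d(y)/B(y)$, into the expression for $D$ and exploit the harmonic identity $y_i\{T(y_i) - T(y_i - 1)\} = I\{y_i \geq 1\}$ (using the convention $T(-1) := 0$) to split each Stein summand as
\begin{equation*}
y_i\{\psi_i(y) - \psi_i(y - e_i)\} = h(y - e_i)\,I\{y_i \geq 1\} + y_i T(y_i)\{h(y) - h(y - e_i)\},
\end{equation*}
isolating a ``diagonal'' piece from a ``mixed'' piece driven by the $y$-dependence of $h$. For the quadratic form I would use $\psi(y)^\tr A^{-1}\psi(y) \leq M\,\|\psi(y)\|^2$ together with $T(y_i)^2 \leq T(y_i) T(y_i + 1)$ (since $T$ is nondecreasing), which gives $\sum_i T(y_i)^2 \leq B(y)$ and hence $\psi(y)^\tr A^{-1}\psi(y) \leq M d(y)^2/B(y)$.

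The substantive work is in controlling the mixed piece. Here I would decompose
\begin{equation*}
h(y) - h(y - e_i) = \frac{d(y) - d(y - e_i)}{B(y - e_i)} - \frac{d(y)\{B(y) - B(y - e_i)\}}{B(y) B(y - e_i)},
\end{equation*}
use the monotonicity hypothesis on $d$ to sign the first summand favourably, and control the second using the explicit increment $B(y) - B(y - e_i) = T(y_i)\{1/y_i + 1/(y_i+1)\}$ for $y_i \geq 1$ together with elementary harmonic estimates such as $T(y_i - 1)\,T(y_i)\,(2y_i+1)/(y_i+1) \leq 2\,T(y_i) T(y_i+1)$. After regrouping with the diagonal piece $\sum_{i: y_i \geq 1} h(y - e_i)$, the Stein-type sum should be bounded from below by a quantity proportional to $(N(y) - 2)_+\,d(y)/B(y)$, the count $N(y) - 2$ emerging once the contributions of coordinates with $y_i \leq 1$, where several of the $T$-based cancellations degenerate, are separated from those with $y_i \geq 2$.

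Collecting everything, $D(y, \phi) \leq \{d(y)/B(y)\}\{M d(y) - 2(N(y) - 2)_+\}$, which is non-positive under the stated constraint on $d$. Strict inequality holds at any $y$ with $B(y) > 0$ and $d(y) > 0$, which is available when $p \geq 3$. The main obstacle will be the step handling the mixed piece: the ratio $h = d/B$ is not monotone in $y$ even when $d$ is, because $B$ itself is monotone in the same direction, so no simple sign argument governs $h(y) - h(y - e_i)$. The algebraic bookkeeping required to recover exactly the effective count $N(y) - 2$ from the interplay between the monotonicity of $d$, the explicit increments of $B$, and the harmonic identities for $T$ is the delicate point on which the whole argument hinges.
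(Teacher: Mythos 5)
Your opening reduction is exactly the paper's: the bound $\psi^\tr A^{-1}\psi \le M\sumip \psi_i^2$, with $M$ the reciprocal of the smallest eigenvalue of $A$, decouples the quadratic term and reduces the problem to showing that $-2\sumip y_i\{\psi_i(y)-\psi_i(y-e_i)\}+M\sumip\psi_i(y)^2$ is non-positive for the specific choice $\psi_i(y)=d(y)T(y_i)/B(y)$; your bound $\sumip T(y_i)^2\le B(y)$ for the quadratic part is also correct. The difference lies in how the remaining coordinatewise problem is closed: the paper does not prove it but invokes Theorem 2.1 of Hwang (1982), which is precisely the statement that this $\psi$ yields $D(\phi,y)\le -2d(y)\{N(y)-2-Md(y)/2\}_+/B(y)$, whereas you attempt to reprove Hwang's lemma from scratch.

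That attempted reproof is where the genuine gap sits, and you flag it yourself. The decomposition $y_i\{\psi_i(y)-\psi_i(y-e_i)\}=h(y-e_i)I\{y_i\ge1\}+y_iT(y_i)\{h(y)-h(y-e_i)\}$ with $h=d/B$ is algebraically correct, but neither piece is then controlled: $h$ is a ratio of two functions that are both nondecreasing, so monotonicity of $d$ gives no sign on $h(y)-h(y-e_i)$, and the diagonal piece sums $d(y-e_i)/B(y-e_i)$ over $\{i\colon y_i\ge1\}$, which neither termwise dominates $d(y)/B(y)$ (the inequality $d(y-e_i)\le d(y)$ points the wrong way) nor obviously produces the count $N(y)=\sumip I\{y_i\le1\}$ appearing in the theorem. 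The assertion that "the Stein-type sum should be bounded from below by a quantity proportional to $(N(y)-2)_+\,d(y)/B(y)$" \emph{is} the content of Hwang's Theorem 2.1; it is stated, not derived, and your closing paragraph concedes that the bookkeeping needed to extract the effective count $N(y)-2$ is unresolved. As written the argument establishes dominance only modulo this unproven combinatorial lemma, so you must either carry the estimate through (separating the coordinates with $y_i=0$, $y_i=1$, and $y_i\ge2$, where the harmonic structure of $T$ and the form of $B$ are actually used) or cite Hwang's result, as the paper does. A minor secondary point: when $d(y)$ equals the upper endpoint $(2/M)\{N(y)-2\}_+$ your final bound is identically zero, so the strict inequality needed for inadmissibility requires an additional remark; this looseness is present in the paper's version as well.
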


\begin{proof} 
The choice of $M$ entails $\psi^\tr A^{-1}\psi \leq M\sumip\psi_i^2$ so that
\beqn
D(\phi,y) \leq -2\sumip y_i\big\{\psi_i(y) -\psi_i(y-e_i) \big\} 
   + \sumip M\psi_i(y)^2.
\eeqn 
The general Theorem 2.1 in \citet{Hwang1982}, with accompanying 
corollaries, can then be used to find $\psi_i(\cdot)$ 
functions that make $D(\phi,y)$ non-positive for all $y$. 
We skip details but record that Hwang's method gives
\beq
D(\phi,y) \leq -2d(y)\frac{\big\{N(y) - 2 - Md(y)/2  \big\}_{+}}{B(y)},
\label{eq:Hwangdiffrisk}
\eeq
which is non-positive for each $\psi(\cdot)$ described in the theorem.
\end{proof}

A natural choice for $d(y)$ is the following, minimising 
the upper bound in \eqref{eq:Hwangdiffrisk}, 
\beqn
d_0(y) = (1/M)\{N(y) - 2 \}_{+}.
\eeqn
This means using $\phi_0(y) = A^{-1}\psi_0(y)$ with 
$D(\phi_0,y)\leq - M^{-1} \{ N(y) - 2 \}^2/B(y)$, 
which shows that 
$$ \delta(y)   = y - A^{-1}\psi_0(y) , \;\; 
   \text {where}\;\; 
\psi_{0,i}(y) = \frac{1}{M}\frac{T(y_i)}{B(x)}\{ N(y) - 2 \}_{+}, $$
achieves
$$ R(\delta,\theta) - R(Y,\theta) 
\leq - M^{-1}\E_\theta\,\big\{ (N(Y) - 2)_{+} \big\}^2/B(Y) < 0, $$
for all $\theta$. Note in particular that the same 
$\psi_0(\cdot)$ function works for a large class 
of loss function \eqref{eq:quadLoss1.r}. 

\begin{example}{\rm  
Let $A$ be the square matrix with $1+c$ down its diagonal 
and $c$ elsewhere. This matrix might be written $A = I + cee^\tr$, 
with $I$ the identity matrix and $e = (1,\ldots,1)^\tr$ 
the $p\times 1$ vector of ones. This choice of $A$ gives 
the loss function \eqref{eq:quadLoss1}.  Here $M= 1$ and 
$A^{-1} = I - \{c/(1+cp)\}ee^\tr$, and 
$\psi^\tr A^{-1}\psi \leq \sumip \psi_i^2$ follows. The natural 
estimator is then $\delta(y) = y - \phi_0(y)$, where
\beqn
\phi_0(y) = \psi_{i,0}(y) - \frac{c}{1+cp}\sum_{j=1}^p \psi_{0,j}(y) 
   = \{N(y)-2\}_{+}\frac{T(y_i) - \frac{cp}{1+cp}\bar{T}(y) }{B(y)},
\eeqn 
in which $\bar{T}(y) = p^{-1}\sumip T(y_i)$. Note that $\delta$ 
does not shrink the $y$ in any particular direction, but 
rather pushes the components $y_i$ in different directions 
according to the sign of $T(y_i) - \{cp/(1+cp)\}\bar T(y)$. 
Note further that if the $y_i$ are moderate or large, then 
$$\delta_i(y)\doteq y_i-{p-2\over \sum_{j=1}^p (\log y_j)^2}
  \Bigl\{\log y_i-{cp\over 1+cp}{1\over p}\sum_{j=1}^p\log y_j\Bigr\}. $$}
\end{example}

\begin{example}{\rm 
Samples of independent Poisson variables arise naturally
when one or more Poisson processes are observed over time.
Dividing the time interval into $p$ parts gives counts 
$y_1,\ldots,y_p$ with certain means $\theta_1,\ldots,\theta_p$. 
In the nonparametric setting, where the intensity of the process
is unknown, these parameters are also completely unknown. 
If one wishes to estimate the cumulative intensity 
of the process, then $(\lambda_1,\ldots,\lambda_p)$ are more 
important than $(\theta_1,\ldots,\theta_p)$, where 
$\lambda_i=\sum_{j=1}^i \theta_j$.
This suggests using the loss function 
$L(\theta,\delta)=\sumip (\hatt\lambda_i-\lambda_i)^2$, 
where $\hatt\lambda_i=\sum_{j=1}^i \delta_j$. 
But this is seen to be a special case of (\ref{eq:quadLoss1.r}), 
with elements $a_{i,j}=p+1-\max(i,j)$ filling the $A$ matrix. 
Its inverse $A^{-1}$ has first row $(1,-1,0,\ldots,0)$, 
last row $(0,\ldots,0,-1,2)$, and in between we find 
$(0,\ldots,0,-1,2,-1,0,\ldots,0)$. One has 
$\psi^\tr A^{-1}\psi\le 4\psi^\tr\psi$, and 
can use $M=4$ when applying the theorem. 
Hence the following estimator improves on $\delta_0=Y$:
\beqn 
\delta_1^*(y)&=&y_1-\psi_{0,1}(y)+\psi_{0,2}(y), \cr
\delta_i^*(y)&=&y_i+\psi_{0,i-1}(y)-2\psi_{0,i}(y)+\psi_{0,i+1}(y)
        \quad {\rm for\ }2\le i \le p-1, \cr
\delta_p^*(y)&=&y_p+\psi_{0,p-1}(y)-2\psi_{0,p}(y),
\eeqn 
where this time 
$$\psi_{0,i}(y)={1\over 4}{T(y_i)\over B(y)}\{N(y)-2\}_+. $$
Notice finally that the corresponding improved
estimators for the cumulative $\lambda_i$ become 
$$\hatt\lambda_i=\sum_{j=1}^i Y_j
                -\psi_{0,i}(Y)+\psi_{0,i+1}(Y) \quad {\rm for\ }i\le p-1, $$
while $\hatt\lambda_p=\sum_{j=1}^p Y_j-\psi_{0,p}(Y)$.} 
\end{example}


\section{The $L_c$ loss function}
\label{section:closs}

The main consideration leading to the loss function 
$(\delta_i - \theta_i)^2/\theta_i$ is that the statistician 
seeks precise estimates of small values of $\theta_i$. 
A loss function that penalises heavily for bad estimates 
of small parameters is then a natural choice. Related to 
this is the obvious fact that when the parameters are small, 
they can only be badly overestimated, zero being the boundary 
of the parameter space. The corresponding multiparameter 
version of this is $\sumip(\delta_i - \theta_i)^2/\theta_i$, 
and is the one treated by \citet*{GhoshHwangTsui1983}, 
\citet{Hwang1982}, \citet{TsuiPress1982}, \citet{CZ1975}, 
and others. Note that $\delta_0 = Y$ has constant risk $p$ 
with this loss, and it is not difficult to establish that 
it is minimax. 

The above mentioned authors obtain classes of estimators 
that perform uniformly better than $\delta_0$ if only 
$p \geq 2$. As discussed in Section \ref{section:intro} and 
illustrated by the oil-well example, these shrinkage 
estimators do not take into account the additional 
desideratum, namely a precise estimate of the sum 
$\gamma = \sumip\theta_i$. We will now study the 
$L_c$ loss function of \eqref{eq:closs}, 
$L_c(\theta,\delta) = L_1^*(\theta,\delta) 
   + c(\sumip\delta_i - \gamma)^2/\gamma$. 
If we consider the second term in \eqref{eq:closs} 
by itself, we recognise the one-dimensional loss function 
$(\delta - \gamma)^2/\gamma$. It is well known that 
$\hatt\gamma=Z$ is admissible and the unique minimax solution 
under this loss function, and can therefore not be uniformly improved 
upon \cite[p.~277]{Lehmann1983}. Consequently, since $z$ 
is the sum of the $y_i$, higher values of $c$ will result
in estimators that lie closer the $\delta_0$. On the other hand, 
we know that for $c=0$ the estimator in \eqref{eq:czestimator} 
uniformly dominates $\delta_0$. Hence, the user defined constant 
$c$ determines how to compromise between $\delta_0$ and 
$\delta_{\text{CZ}}$.  

Before we derive a class of estimators that dominate $\delta_0$ 
under $L_c$ in Section \ref{section:dominatingclass}, we derive 
formulae for the Bayes solution and show that $\delta_0$ 
is minimax.




\subsection{The $\delta_0=Y$ estimator is minimax.} 

The maximum likelihood estimator $\delta_0=Y$ has constant risk 
$p+c$ under $L_c$ and is a natural candidate for being minimax. 
We demonstrate minimaxity by exhibiting a sequence of priors 
with minimum Bayes risks $\text{BR}(\delta,\pi) = \E\,R_c(\delta,\theta)$ 
which converge towards $p+c$; 
that this is sufficient follows from well-known arguments,
as exposited e.g.~in \citet[Ch.~2.4]{Robert2007}. 
Some analysis is required to characterise the Bayes solution. 
We first find the values $\delta_1,\ldots,\delta_p$ that 
minimise the posterior expected loss, i.e.~given a dataset
$y=(y_1,\ldots,y_p)$, with respect to some distribution 
over the parameter space. With $\gamma=\sumip\theta_i$, 
introduce  
$$ a_i =  \{\E(\theta_i^{-1}\midd y)\}^{-1},\quad
   a = \sumip a_i,\quad \text{and}\quad  
   b = \{\E(\gamma^{-1}\midd y)\}^{-1}.$$
Then 
\beqn
\E\,\{L_c(\theta,\delta)\midd y\}  
   = \sumip \big\{\delta_i^2/a_i - 2\delta_i + \E(\theta_i\midd y) \big\}
+ c\Bigl\{ \Bigl(\sumip\delta_i \Bigr)^2/b - 2\sumip \delta_i 
   + \E(\gamma\midd y)  \Bigr\},
\eeqn
assuming the moments to exist. Some analysis shows that 
the minimum takes place for 
\beq
\label{eq:bayes1}
\delta_i^B(y) 
   = \frac{1+c}{1+ca/b}a_i
   = \frac{1+c}{1+ca/b}{1\over \E\,\{(1/\theta_i)\midd y\}}
\quad {\rm for\ }i=1,\ldots,p. 
\eeq
This generalises the familiar result that the Bayes solution 
is $\{\E(\theta_i^{-1}\midd Y)\}^{-1} = a_i$ 
under $L_1^*(\theta,\delta)$, that is, when $c=0$. 
Note also that if $\E\,\theta_i^{-1} = \infty$ for some $i$, 
and $\E\,\theta_i$ is finite, then only $\delta_i = 0$ 
gives a finite risk, which means that \eqref{eq:bayes1} 
is correct even in such cases.   

To illustrate this, suppose $\theta_i$ has a Gamma prior 
with parameters $(\alpha_i,\beta)$, 
which we write as $\Gam(\alpha_i,\beta)$, 
i.e.~with prior mean $\alpha_i/\beta$, 
and that these are independent. Then 
$\theta_i\midd y\sim\Gam(\alpha_i+y_i,\beta+1)$, 
and some calculations lead to the Bayes estimators
\beqn
\begin{array}{rcl}
\delta_i^B
&=&\displaystyle 
{1+c\over 1+c(p\bar\alpha+z-p)/(p\bar\alpha+z-1)}
   {\alpha_i+y_i-1\over \beta+1} \\ 
&=&\displaystyle 
{(1+c)(p\bar\alpha+z-1)\over 
   (1+c)(p\bar\alpha+z-1)-c(p-1)}
      {\alpha_i+y_i-1\over \beta+1} 
\end{array}
\eeqn 
for $i=1,\ldots,p$, writing $\bar\alpha=(1/p)\sumip\alpha_i$
and again $z=\sumip y_i$. 
If $\alpha_i\le1$, then the Bayes estimate is zero 
if $y_i=0$, by the comment above about (\ref{eq:bayes1}).
We note that for large $c$, corresponding to the loss
being essentially related to estimating the 
sum $\gamma=\sumip\theta_i$ well under 
$(\hatt\gamma-\gamma)^2/\gamma$ loss, 
\beqn
\delta_i^B\doteq {p\bar\alpha+z-1\over \beta+1}
   {\alpha_i+y_i-1\over p\bar\alpha+z-p}, 
   \quad {\rm with\ sum} \quad 
   {p\bar\alpha+z-1\over \beta+1},
\eeqn 
tying in with $\gamma\midd y$ being a Gamma 
with parameters $(p\bar\alpha+z,\beta+1)$. 

Now consider the special case where the $\theta_i$
are independent Gammas with parameters $(1,\beta)$.
The Bayes solution then takes the form 
\beq
\delta_i^B 
   = \frac{1+c}{1+cz/(p-1+z)}\frac{y_i}{1+\beta} 
   = \frac{(1+c)(p-1+z)}{p-1+(1+c)z}\frac{y_i}{1+\beta}.
\label{eq:gammaBayes}
\eeq
It now remains to show that the minimum Bayes risk 
for this prior, say $\text{MBR}(1,\beta)$, 
tends to $p+c$ as $\beta \to 0$. Using that $Y_i$ given $Z$ 
is binomial with mean $Z\theta_i/\gamma$, 
provided $Z\ge1$, and that the $Y$ vector and hence 
the $\delta^B$ estimator are equal to zero when $Z=0$,   
the risk of $\delta_i^{B}$ can be expressed as 
\beqn
\begin{split}
R_c(\delta_i^{B},\theta)  
   & = \E_\theta\,\E_\theta\,\{L(\theta,\delta_i^{B})\midd Z\} \\
   & = \E_{\gamma} \Bigl\{  \frac{(1+c)^2(p-1+Z)^2}{p-1+(1+c)Z}
   \frac{Z}{\gamma\,(1+\beta)^2} \\
&\hspace{2cm} - 2\frac{(1+c)^2(p-1+Z)}{p-1+(1+c)Z}
   \frac{Z}{1+\beta} + (1+c)\gamma \Bigr\} .
\end{split}
\notag
\eeqn
Since the risk depends on the $\theta_i$ only through 
the sum $\gamma$, the minimum Bayes risk may be written 
\beq
\text{MBR}(1,\beta)  
   = \E_\gamma\,\E_\gamma\,\{L(\theta,\delta^{B})\midd Z\}
   =  \frac{1+c}{1+\beta}\,\E_\gamma\,\frac{p(p-1) + (p+c)Z}{p-1+(1+c)Z}, 
\notag
\eeq
in which the expectation on the right is with respect 
to the marginal distribution of $Z$. Since $Z$ tends 
in probability to infinity as $\beta \to 0$, 
the function above converges in probability, 
\beqn
\frac{p(p-1) + (p+c)Z}{p-1+(1+c)Z} \rightarrow_\pr \frac{p+c}{1+c},
\eeqn
when $\beta \to 0$. Furthermore, this function 
is bounded by $p$, so by the bounded convergence 
theorem $\text{MBR}(1,\beta)$ tends to $p+c$ as $\beta$ 
goes to zero, as was to be shown.  

\subsection{A dominating class of estimators.}
\label{section:dominatingclass}

We will now develop a class of estimators with uniformly 
smaller risk than $p+c$ under the $L_c$ loss function, 
that is, estimators that uniformly dominate the maximum 
likelihood estimator. Consider estimators of the form 
$\delta_i(Y) = \{1 - \phi(Z)\}Y_i$. 
Write $D(\phi,y) = L_c(\theta,\delta) -  L_c(\theta,\delta_0)$ 
for the difference in loss. Then 
\beqn
D(\phi,y)
= \sumip\Bigl\{ \frac{\phi(z)^2y_i^2 - 2\phi(z)y_i(y_i - \theta_i)}{\theta_i}
+ c \frac{\phi(z)^2z^2 -2 \phi(z)z(z-\gamma)}{\gamma}\Bigr\},
\eeqn 
Now, use the fact that for any real valued function $h$
with finite mean $\E_\theta\,h(Y)$, and with the property 
that $h(y) = 0$ whenever $y_i = 0$,  
the following identity holds: 
\beq
\label{eq:useful.identity}
\E_\theta\,h(Y)/\theta_i = \E_\theta\,h(Y + e_i)/(Y_i + 1).
\eeq
Using this identity we obtain an expression for the 
difference in risk 
$\E_\theta\,D(\phi,Y) = R(\delta^*,\theta) -R(Y,\theta)$, namely  
\beq
\begin{split}
\E_\theta&D(\phi,Y) = \E_\theta\,\E_\theta\,\{D(\phi,Y)\midd Z\} \\ 
& = \E_\theta\,\Bigl[\{\phi^2(Z)-2\phi(Z)\}\frac{Z\{(p-1) + (1+c)Z\}}{\gamma} 
   + 2(1+c)\phi(Z)Z\Bigr] \\ 
& = \E_\theta\,\big\{(\phi^2(Z+1)-2\phi(Z+1)) \{(p-1) + (1+c)(Z+1)\}
 + 2(1+c)\phi(Z)Z\big\}. 
\end{split}
\notag
\eeq
This can hence be expressed as $\E_\gamma\,D^*(\phi,Z)$,
with the $D^*(\phi,z)$ function not depending on the 
parameters; in particular, the risk function 
$R_c(\delta,\theta)=p+c+\E_\gamma\,D^*(\phi,Z)$ 
depends on the parameter vector only via 
$\gamma=\sumip\theta_i$. Also, any function $\phi(\cdot)$ 
that ensures that $D^*(\phi,z)\leq 0$ for all $z$, 
with strict inequality for at least one datum $z$, 
yields an estimator that uniformly dominates the $\delta_0$. 
This leads to the following result.

\begin{theorem}
\label{th:delta_c_theorem} 
For each function $\phi(\cdot)$ such that
\beqn
0 < \phi(z) < \frac{2(p-1)}{p-1+(1+c)z} 
   \quad\text{and}\quad
   \phi(z)z < \phi(z+1)(z+1) 
\eeqn
for all $z$, the estimator $\delta_c = \{1 - \phi(Z)\}Y$ 
uniformly dominates $\delta_0 = Y$. These conditions 
are met for functions of the type $\phi(z) = \psi(z)/\{p-1+(1+c)z\}$ 
where $\psi(\cdot)$ is nondecreasing with $0 < \psi(z)  < 2(p-1)$. 
In particular, $\delta_0$ is inadmissible if $p \geq 2$.  
\end{theorem}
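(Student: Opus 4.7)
The plan is to verify, using the formula $\E_\theta D(\phi,Y) = \E_\gamma D^*(\phi,Z)$ derived immediately before the theorem, that the hypotheses on $\phi$ force $D^*(\phi,z) < 0$ at every nonnegative integer $z$; this will yield uniform dominance of $\delta_0$. Writing $g(z) = p-1+(1+c)z$, the quantity in question is
$$ D^*(\phi,z) = \{\phi^2(z+1) - 2\phi(z+1)\}\,g(z+1) + 2(1+c)\phi(z)z. $$

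The key manoeuvre is to apply the monotonicity hypothesis $\phi(z)z < \phi(z+1)(z+1)$ to replace the last summand by the strictly larger quantity $2(1+c)\phi(z+1)(z+1)$. Combining with the middle term and using the identity $g(z+1) - (1+c)(z+1) = p-1$, the expression collapses to
$$ D^*(\phi,z) < \phi(z+1)\bigl\{\phi(z+1)\,g(z+1) - 2(p-1)\bigr\}. $$
The upper-bound hypothesis $\phi(z+1) < 2(p-1)/g(z+1)$ makes the bracket strictly negative, while $\phi(z+1) > 0$; hence $D^*(\phi,z) < 0$ for all $z \ge 0$, and therefore $R_c(\delta,\theta) < R_c(\delta_0,\theta) = p+c$ for every $\theta$.

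For the announced sub-class $\phi(z) = \psi(z)/g(z)$ with $\psi$ nondecreasing and $0 < \psi(z) < 2(p-1)$, the upper-bound condition is immediate, while the monotonicity condition $\phi(z)z < \phi(z+1)(z+1)$ reduces, after cross-multiplying, to $(z+1)g(z) > z\,g(z+1)$, which simplifies to $p-1 > 0$; this is exactly where the hypothesis $p \ge 2$ enters. Inadmissibility then follows by exhibiting a concrete legitimate choice, for instance $\psi(z) \equiv p-1$, which recovers the estimator $\delta^c$ of \eqref{eq:emilnilsestimator}. I expect the only real obstacle to be the algebraic bookkeeping needed to produce the factorised bound $\phi(z+1)\{\phi(z+1)g(z+1) - 2(p-1)\}$; the monotonicity hypothesis is doing genuine work there, since it is precisely what allows the $(1+c)(z+1)$ piece inside $g(z+1)$ to cancel, leaving the $p-1$ against which the upper-bound hypothesis can bite.
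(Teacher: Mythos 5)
Your proof is correct and takes essentially the same approach as the paper: both start from the risk-difference representation $\E_\gamma\,D^*(\phi,Z)$ derived just before the theorem and verify negativity directly from the two hypotheses on $\phi(\cdot)$. The only difference is the order in which the hypotheses are applied --- you use monotonicity first and the upper bound second, obtaining the pointwise factorised bound $D^*(\phi,z)<\phi(z+1)\{\phi(z+1)g(z+1)-2(p-1)\}<0$, whereas the paper applies the upper bound first inside the expectation and finishes with $-(1+c)\,\E_\gamma\{\phi(Z+1)(Z+1)-\phi(Z)Z\}<0$; both orderings are valid and your verification of the sub-class condition via $(z+1)g(z)-zg(z+1)=p-1>0$ matches what the paper leaves implicit.
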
 

\begin{proof} 
Using the expression for $D^*(\phi,Z)$ derived above, 
the following holds: 
\beqn
\E_\gamma\,D^*(\phi,Z)
&=&\E_\gamma \Bigl[\{\phi(Z)^2-2\phi(Z)\}{Z(p-1)+(1+c)Z\over \gamma}
   +2(1+c)\phi(Z)Z\Bigr] \\
&=&\E_\gamma\,\phi(Z)Z \Bigl[\{\phi(Z)-2\}{p-1+(1+c)Z\over \gamma}
   +2(1+c)\Bigr] \\
&=&{1\over\gamma}\E_\gamma\,\phi(Z)Z \bigl[\phi(Z)\{p-1-(1+c)Z\}
   -2 \{p-1+(1+c)(Z-\gamma)\}\bigr] \\
&\le&{1\over\gamma}\E_\gamma\,\phi(Z)Z 
   \bigl[2(p-1)-2\{p-1+(1+c)(Z-\gamma)\}\bigr] \\
&=&-{1+c\over \gamma}\E_\gamma\, \phi(Z)Z(Z-\gamma) \\
&=&-(1+c)\,\E_\gamma\,\{\phi(Z)Z^2/\gamma-\phi(Z)Z\} \\
&=&-(1+c)\,\E_\gamma\,\{\phi(Z+1)(Z+1)-\phi(Z)Z\} < 0. 
\eeqn 
This is valid for all $\gamma$ since $\phi(z)z$ 
is a strictly increasing function of $z$. 
\end{proof}

We denote by $\mathcal{D}_c$ the class of estimators 
\beqn
\{1 - \phi(Z)\}Y \quad{\rm where}\quad 
   \phi(z) = \psi(z)/\{p-1+(1+c)z\} 
\eeqn 
and with $\psi(\cdot)$ satisfying the conditions of 
Theorem \ref{th:delta_c_theorem}. The optimal choice 
of $\psi(\cdot)$ in terms of minimising risk, based 
on the simple upper bound of $\E_\theta\,D^*(\phi,Z)$, 
is $\psi(z) = p-1$, leading to the estimator
\beq
\label{eq:delta1}
\delta_i^c(Y) = \Bigl\{1 - \frac{p-1}{p-1+(1+c)Z}\Bigr\} Y_i
   \quad {\rm for\ }i=1,\ldots,p. 
\eeq 
Note that $\delta^c$ appropriately generalises the 
Clevenson--Zidek estimator of \eqref{eq:czestimator}. 
Importantly, it is clearly seen how fine-tuning of 
$c$ determines the amount of shrinkage away from 
the $\delta_0$. We can use the expression for $D^*(\phi,z)$ 
derived above to find the risk function for the 
estimator in \eqref{eq:delta1}, 
\beq
R_c(\delta^c,\theta)  
   = p + c - \E_\gamma\, \frac{(p-1)^2}{p-1+(1+c)(Z+1)}
   \Bigl\{1 + \frac{2(1+c)}{p-1+(1+c)Z}\Bigr\}.
\notag
\eeq
Note that the risk depends on $\theta_i$ only through 
the sum $\gamma$, and that numerical evaluation is easy 
because $Z$ is Poisson with mean $\gamma$. The risk function 
starts at 
\beqn
R_c(\delta^c,\theta)
   = p+c - \frac{(p-1)^2}{p+c} - 2(1+c)\frac{p-1}{p+c} 
   = \frac{(1+c)^2}{p+c},
\eeqn 
for $\theta=0$, and then increases continuously towards 
the minimax risk $p+c$. As illustrated in Figure \ref{figure:risk1}, 
the improvement over $\delta_0=Y$ is substantial 
for small to moderate values of $\gamma$, 
and always lies below the risk of the usual estimator $\delta_0$.   

\begin{figure}[ht]
\centering
\includegraphics[scale=0.55,angle=0]{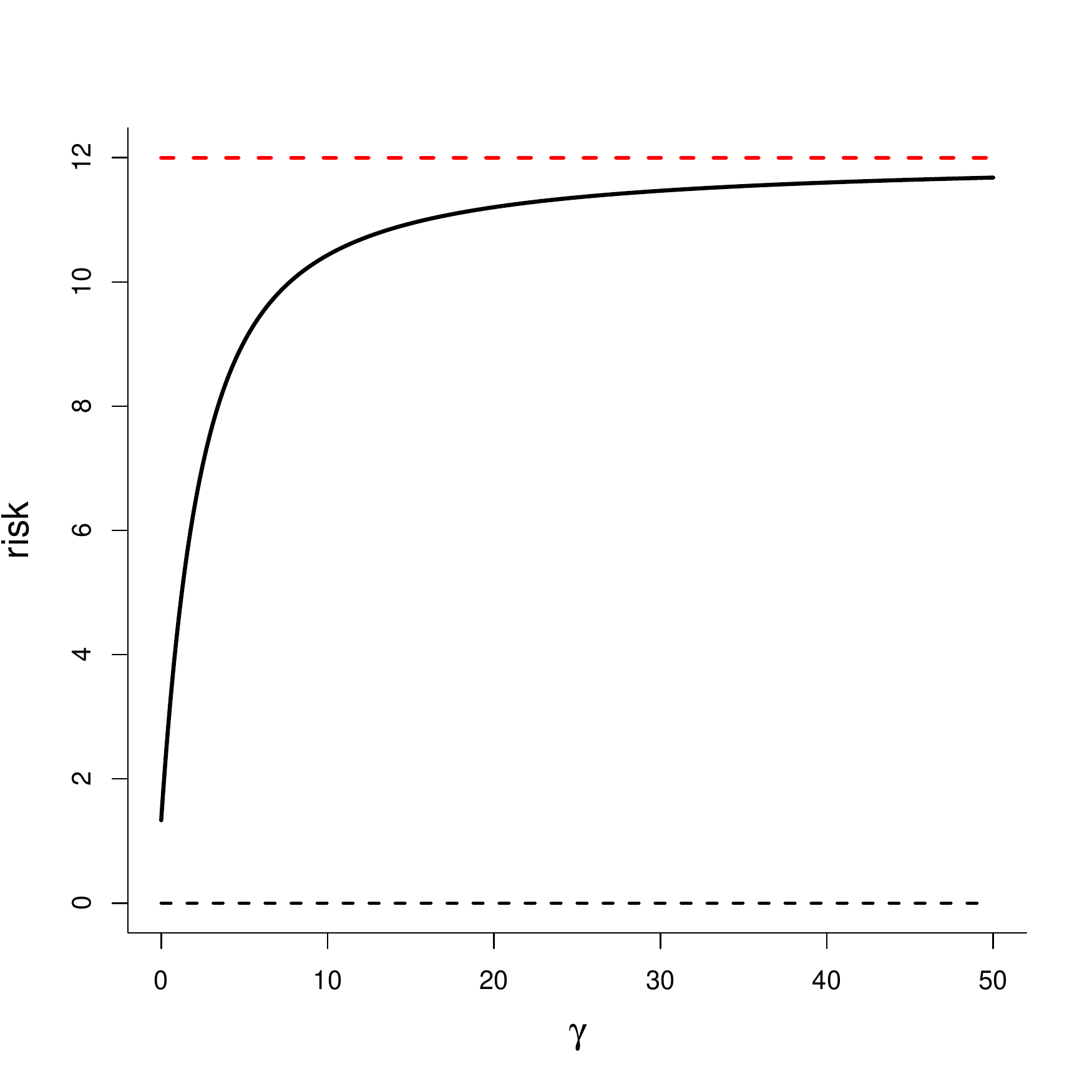}
\caption{The risk functions of $\delta^c$ of \eqref{eq:delta1} 
(full curve) and of $\delta_0 = Y$ (constant, slanted), 
with $p = 9$ and $c = 3$.}
\label{figure:risk1}
\end{figure}

\subsection{Loss function robustness.}

Robustness of performance statements with respect to 
the specific loss function used is often overlooked
in the literature, as if the loss function worked 
with had been handed down from above with absolute
precision. The matter is clearly of importance, 
however, as e.g.~briefly pointed to in comments by 
both \citet{Berger1983} and \citet{Morris1983b}. 
If an estimator performs well with respect to 
one loss function ${\rm Lo}_a$, but not for
another loss function ${\rm Lo}_b$, say, even
when these two are close, it is a cause for concern. 
We argue here, however, that our estimator (\ref{eq:delta1}), 
derived under loss function $L_c$ of (\ref{eq:quadLoss1}), 
is somewhat robust regarding the precise value of $c$. 

First consider the estimator $\delta_{\text{CZ}}$ 
of \eqref{eq:czestimator}, which uses $\phi_0(z) = (p-1)/\{p-1+z\}$. 
It satisfies the second requirement of 
Theorem \ref{th:delta_c_theorem}, 
i.e.~$\phi_0(z)z < \phi_0(z+1)(z+1)$ for all $z$, 
but it does not necessarily satisfy the first requirement, 
namely $0 < \phi(z) \leq 2(p-1)/\{p-1+(1+c)z\}$ for all $z$. 
It is easy to verify that if $0 \leq c \leq 1$ 
then it does satisfy this requirement, however, 
showing that $\delta_{\text{CZ}}$ has certain robustness 
properties with respect to the $L_c$ loss function: 
It is minimax and uniformly dominates $\delta_0$ under $L_c$, 
provided $0 \leq c \leq 1$.        

One can similarly study how the 
$\delta^{c_0}(Y) = [(1+c_0)Z/\{p-1+(1+c_0)Z\}]\,Y$ 
of \eqref{eq:delta1} fares when the loss function is 
not quite the $L_{c_0}$ under which it was derived, 
but rather $L_c$, with another penalty value of $c$, 
that is, with a somewhat different penalty paid to 
incorrect estimation of the sum $\gamma$. From the first 
condition of Theorem \ref{th:delta_c_theorem} we see that 
$\delta^{c_0}$ is still minimax and uniformly better than 
$\delta_0 = y$ under $L_c$, provided that 
$ 0 \leq c \leq 2c_0 + 1$. An immediate implication 
of this is that all estimators in the class $\mathcal{D}_c$ 
are minimax and uniformly better than $\delta_0$ under 
the $L_1^*$ loss function, showing that the more 
prudent estimation strategy $\delta^c$, in the sense 
that it shrinks less that $\delta_{\text{CZ}}$, is robust to $c=0$.    

\section{Bayes, empirical Bayes, and admissibility} 
\label{section:Bayes}

In this section a certain class of priors is studied 
along with Bayes and empirical Bayes consequences. 
The estimator $\delta^c$ of \eqref{eq:delta1} will 
be shown to be both a natural generalised Bayes estimator 
with respect to a certain noninformative prior, and
a natural empirical Bayes estimator with respect to 
independent Gamma priors. In addition, a class of 
proper Bayes estimators belonging to the class 
$\mathcal{D}_c$ is derived from another prior 
construction. Finally, we are also able to show that 
for each given $c$, the $\delta^c$ of \eqref{eq:delta1} 
is an admissible estimator, under the $L_c$ loss function.
In particular, the Clevenson--Zidek estimator (\ref{eq:czestimator})
is admissible under the $L_0$ function for which
it was derived. Our result hence generalises
that of \citet[Theorem 4.1]{Johnstone1984b}. 

\subsection{Priors with sum independent of proportions.}

In the following we model the means 
$(\theta_1,\ldots,\theta_p)$ in terms of the sum 
$\gamma=\sumip\theta_i$ and proportions 
$\pi_i = \theta_i/\gamma$ for $i=1,\ldots,p$. 

\begin{lemma} 
\label{lemma:lemma1}
Suppose that $\gamma$ and $\pi = (\pi_1,\ldots, \pi_p)$ 
are independent with simultaneous prior density 
$q(\gamma)r(\pi_1,\ldots, \pi_{p-1})$. 
This corresponds to a density 
$$q\Bigl(\sumip\theta_i\Bigr)\frac{1}{(\sumip\theta_i)^{p-1}}\, 
   r\left(\frac{\theta_1}{\sumip\theta_i},\ldots,
   \frac{\theta_{p-1}}{\sumip\theta_i}\right) $$
for $(\theta_1,\ldots,\theta_p)$. The posterior distribution for 
$\theta$ given the data is on the same form: $\gamma$ and 
$(\pi_1,\ldots,\pi_p)$ are still independent, and
\beqn
\begin{split}
\gamma\midd y &\sim \text{\normalfont const.} 
   \, \gamma^z e^{-\gamma} q(\gamma),\\
\pi\midd y &\sim \text{\normalfont const.} 
   \,\pi_1^{y_1}\cdots \pi_{p-1}^{y_{p-1}} 
   (1 - \pi_1 - \cdots - \pi_{p-1})^{y_p} r(\pi_1,\ldots,\pi_{p-1}). 
\end{split}
\eeqn
\end{lemma}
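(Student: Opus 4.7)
The plan is to work in the $(\gamma,\pi)$ parameterisation throughout, so that the claimed posterior independence follows directly from a factorisation of the joint density into a part depending only on $\gamma$ and a part depending only on $\pi_1,\ldots,\pi_{p-1}$.

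First I would verify the expression stated for the prior density on $(\theta_1,\ldots,\theta_p)$. Consider the transformation $(\theta_1,\ldots,\theta_p)\mapsto(\gamma,\pi_1,\ldots,\pi_{p-1})$ defined by $\gamma=\sumip \theta_i$, $\pi_i=\theta_i/\gamma$ for $i\le p-1$, with $\pi_p=1-\sum_{i=1}^{p-1}\pi_i$. Its inverse is $\theta_i=\gamma\pi_i$ for $i\le p-1$ and $\theta_p=\gamma(1-\sum_{i=1}^{p-1}\pi_i)$. A direct computation of the Jacobian determinant gives $|\partial\theta/\partial(\gamma,\pi_1,\ldots,\pi_{p-1})|=\gamma^{p-1}$. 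Transforming the assumed density $q(\gamma)r(\pi_1,\ldots,\pi_{p-1})$ by dividing by this Jacobian yields exactly the density displayed in the statement for $(\theta_1,\ldots,\theta_p)$.

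Next, I would work the posterior out in $(\gamma,\pi)$ coordinates, where the Poisson likelihood has a particularly convenient form. Substituting $\theta_i=\gamma\pi_i$ in $\prod_{i=1}^p e^{-\theta_i}\theta_i^{y_i}/y_i!$ gives, up to a constant not depending on the parameters,
\beqn
\prod_{i=1}^p e^{-\gamma\pi_i}(\gamma\pi_i)^{y_i}
 \;\propto\; \bigl(\gamma^{z} e^{-\gamma}\bigr)\cdot \prod_{i=1}^p \pi_i^{y_i},
\eeqn
since $\sumip\pi_i=1$ and $z=\sumip y_i$. Multiplying by the prior $q(\gamma)r(\pi_1,\ldots,\pi_{p-1})$ produces a joint posterior density on $(\gamma,\pi)$ proportional to
\beqn
\bigl\{\gamma^{z}e^{-\gamma}q(\gamma)\bigr\}
\cdot\bigl\{\pi_1^{y_1}\cdots\pi_{p-1}^{y_{p-1}}(1-\pi_1-\cdots-\pi_{p-1})^{y_p}r(\pi_1,\ldots,\pi_{p-1})\bigr\}.
\eeqn
This is a product of a function of $\gamma$ alone and a function of $\pi$ alone, so $\gamma$ and $\pi$ are independent under the posterior, with the two conditional densities being precisely those stated.

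There is no real obstacle here; the only thing to be careful about is the Jacobian factor $\gamma^{p-1}$, which explains the $1/(\sumip\theta_i)^{p-1}$ in the stated prior density on $\theta$, and the clean way the Poisson likelihood factorises once the reparameterisation is applied. Provided the normalising constants in the displayed posteriors are finite (which is automatic whenever the prior itself has finite total mass, and otherwise gives the corresponding generalised Bayes statement), the conclusion follows.
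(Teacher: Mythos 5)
Your proof is correct and follows essentially the same route as the paper's: the change-of-variables computation with Jacobian $\gamma^{p-1}$ (equivalently $(\sumip\theta_i)^{-(p-1)}$ for the forward map) for the prior density, and the factorisation of the Poisson likelihood as $\gamma^z e^{-\gamma}\prod_i\pi_i^{y_i}$ for the posterior independence. No discrepancies to report.
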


\begin{proof} 
The first part in an exercise in the transformation of 
random variables, involving calculating the determinant 
of Jacobi matrix 
$|\partial(\gamma,\pi_1,\ldots,\pi_{p-1})/\partial(\theta_1,\ldots,\theta_p)|
   =  \big(\sumip\theta_i\big)^{-(p-1)}$. The second part 
follows because the combined Poisson likelihood 
$\prod_{i=1}^p \theta_i^{y_i} e^{-\theta_i}/y_i!$ 
is proportional to 
$\gamma^{z} e^{-\gamma} \pi_1^{y_1}\cdots\pi_{p-1}^{y_{p-1}}
   (1-\pi_1-\cdots-\pi_{p-1})^{y_p}$. 
\end{proof}

If in particular $(\pi_1,\ldots,\pi_p)$ has a Dirichlet prior 
distribution with parameters $(\alpha_1,\ldots,\alpha_p)$, 
then the posterior is another Dirichlet with updated parameters 
$(\alpha_1 + y_1,\ldots,\alpha_p + y_p)$, and this holds 
regardless of the prior used for $\gamma$. The important 
case of independent $\theta_i$ from a Gamma $(\alpha,\beta)$ 
corresponds to a Gamma prior $(p\alpha,\beta)$ for $\gamma$ 
and an independent Dirichlet $(\alpha,\ldots,\alpha)$ for the proportions.  

Suppose $(\pi_1,\ldots,\pi_p)$ comes from a symmetric 
Dirichlet $(\alpha,\ldots,\alpha)$ independent of $\gamma$, 
the latter coming from a suitable prior $q(\cdot)$. 
The Bayes estimator under the $L_c$ loss function 
takes the form \eqref{eq:bayes1}, 
i.e.~$\delta_i = \{(1+c)/(1 + ca/b)\}a_i$, with
\beq
a_i = \big\{\E\,(\gamma^{-1}\pi_i^{-1}\midd y) \big\}^{-1}
= \big\{\E\,(\gamma^{-1}\midd y) \big\}^{-1}
   \big\{\E\,(\pi_i^{-1}\midd y) \big\}^{-1}
 = \frac{K(z)}{K(z-1)}\frac{\alpha + y_i - 1}{p\alpha + z - 1},
\notag
\eeq 
and $b = \big\{\E\,(\gamma^{-1}\midd Y) \big\}^{-1} = K(z)/K(z-1)$, 
writing $K(z) = \int_0^{\infty} \gamma^z e^{-\gamma}q(\gamma)\,\dd\gamma$. 
Letting in particular $\pi$ be uniform over the simplex 
we obtain the estimator 
\beq
\begin{split}
\delta_{B,i}(y) &= \frac{1+c}{1+ca/b}\,a_i 
= \frac{K(z)}{K(z-1)}\frac{1+c}{p-1+(1+c)z}y_i.
\end{split}
\label{eq:Bayes.symmetric}
\eeq
We are now in a position to give three pleasing interpretations 
of the estimator $\delta^c$ of \eqref{eq:delta1}. 

First, it is a \textit{generalised Bayes estimator}. For if 
$\gamma$ is given a flat prior $q(\gamma) = 1$ on the halfline, 
then $K(z) = \Gamma(z+1)$, which inserted in \eqref{eq:Bayes.symmetric} 
gives $\delta_B = \delta^c$. 

Second, it is a \textit{limit of Bayes estimators}. 
Let the proportions $(\pi_1,\ldots,\pi_p)$
have a flat Dirichlet $(1,\ldots,1)$ prior, 
with an independent $\gamma$ from a Gamma $(1,\beta)$. 
The Bayes solution is then 
\beqn
\delta_i^B = \bigg( 1 - \frac{p-1}{p-1+(1+c)z} \bigg) \frac{y_i}{1 + \beta},   
\eeqn 
and the limit as $\beta \to 0$ is again $\delta_i^c$ 
of \eqref{eq:delta1}. 

Thirdly, it is a natural \textit{empirical Bayes estimator}. 
To see one of several such constructions let the $\theta_i$ 
be independent $\Gam(1,\beta)$. The corresponding 
exact Bayes solution is given in \eqref{eq:gammaBayes}. 
Now $\beta$ is to be estimated from the data. The marginal 
distribution of $Z$ is found from the facts that $Z$ given $\gamma$ 
is Poisson with mean $\gamma$, and $\gamma$ 
is Gamma distributed with parameters $(p,\beta)$, so
\beqn
\text{Pr}\{Z= z\} = \frac{\beta^p}{\Gamma(p)}\frac{1}{z!} 
\frac{ \Gamma(p+z)}{(1+\beta)^{p+z}},\;\; z = 0,1,2,\ldots.
\eeqn
The sum $Z$ is sufficient for $\beta$ and $Z/(p-1+Z)$ 
is found to be an unbiased estimator for $1/(1+\beta)$,
for any $p\ge2$.  
Inserting this data-based value in \eqref{eq:gammaBayes} 
produces once again $\delta^c$. 

Finally, the Gamma $(1,\beta)$ prior construction may 
be extended to an hierarchical setup where a prior 
is put on the hyperparameter $\beta$. This extends 
the approach of \citet{GhoshParsian1981} 
from the $L_1^*$ of (\ref{eq:twolossfunctions}) to the $L_c$ setting.
Let the parameter $\beta$ be distributed according to 
\beq
\label{eq:hyperprior}
s(\beta) \propto \beta^{\eta-1}(\beta+1)^{-(\eta+\zeta)}.
\eeq
Utilising that 
\beqn
\E\,\Bigl({1\over \theta_i}\midd y\Big)
   =\E\,\E\,\Bigl({1\over \theta_i}\midd y,\beta\Bigr)
  =\E\,\Bigl({1+\beta\over y_i}\midd y\Bigr), 
\eeqn 
the Bayes solution is then
\beq
\delta_i^B(y) = \frac{(1+c)(p-1+z)}{p-1 + (1+c)z} \frac{y_i}
   {\E\,(1 + \beta \mid z)},
\label{eq:gamma.post}
\eeq
since $Z$ is sufficient for $\beta$. 
Also, $\beta$ given $Z=z$ is distributed as 
\beqn
\beta\midd z \sim \{B(\eta + p,\zeta + z) \}^{-1} 
   \beta^{\eta + p - 1}(1+\beta)^{-(\eta + \zeta + z + p)},
\eeqn
where $B(a,b) = \Gamma(a)\Gamma(b)/\Gamma(a+b)$ is 
the Beta function. Provided $\beta$ comes from the 
$s(\beta)$ of \eqref{eq:hyperprior}, and 
$0 < \eta \leq (p-2-c)/(1+c)$, then the Bayes solution 
in \eqref{eq:gamma.post}, by virtue of belonging 
to the class $\mathcal{D}_c$, is minimax 
and uniformly dominates $\delta_0$. To see this, insert 
\beqn
\E\,(1+\beta\midd z) = \frac{p+\eta+\zeta+z-1}{z+\zeta-1} 
\eeqn
in \eqref{eq:gamma.post}, yielding   
\beq
\delta_i^B(y) = \frac{(1+c)(p-1+z)}{p-1+(1+c)z}
   \frac{z+\zeta-1}{p+\eta+\zeta+z-1}y_i.
\label{eq:minimax.Bayes.solution}
\eeq
By some algebra we obtain that for the Bayes solution we here consider 
\beq
\begin{split}
\psi(z)  
   & = p-1+(1+c)z - (1+c)(p-1+z)\frac{z+\zeta-1}{p+\eta+\zeta+z-1}\\
   & = (1+c)\frac{(p-1+z)(p+\eta)}{p-1+\eta+\zeta+z} - c(p-1) .
\end{split}
\notag
\eeq
This function is non-decreasing for all $z\geq 0$. 
Moreover, we see that it is bounded above by 
\beq
\sup_{z\geq 0 }\psi(z) = (1+c)(p+\eta) \leq 2(p-1),
\notag
\eeq
since $\eta \leq (p-2-c)/(1+c)$. This means that the 
class of Bayes solutions in \eqref{eq:minimax.Bayes.solution}, 
with $\eta$ obeying the constraint mentioned, 
satisfy both conditions of Theorem~\ref{th:delta_c_theorem}.  

\subsection{Admissibility.}
\label{section:admissibility}

So far we have studied estimators that dominate the 
maximum likelihood estimator under the $L_c$ loss function. 
In this section we will prove that the estimator 
$\delta^c$ of \eqref{eq:delta1} cannot be uniformly 
improved upon, that is, $\delta^c$ is admissible. 
The basic ingredient in this proof is the characterisation 
of admissibility given by \citet[Theorem 2.6]{BrownFarrel1988}. 
According to this theorem an estimator $\delta$ is 
admissible for $\theta$ under $(\delta - \theta)^2/\theta$ 
if and only if there exists a sequence of finite measures 
$\nu_n$ such that the Bayes solution with respect to 
$\nu_n$, say $\delta_n$, converges to $\delta$ 
as $n\to \infty$, and
\beqn
\lim_{n\to\infty}\{\text{BR}(\delta,\nu_n) - \text{MBR}(\nu_n)\} = 0.
\eeqn 
This prior sequence has to satisfy certain conditions, 
the details of which are stated in \citet{BrownFarrel1988} 
and \citet[pages 237-238]{Johnstone1984b}. 
For our purpose, it is sufficient to know that 
such a sequence exists if $\delta$ is admissible. 

Consider the class of estimators given by 
\beq
\label{eq:symform}
\delta_i = \frac{(1+c)\kappa(Z)}{p-1+(1+c)Z}\,Y_i
         =\frac{(1+c)Y_i}{p-1+(1+c)Z}\,\kappa(Z)
   \quad {\rm for\ }i=1,\ldots,p.
\eeq
The Bayes estimators of \eqref{eq:Bayes.symmetric} are 
on this form with $\kappa(z) = K(z)/K(z-1)$; 
in particular the estimator $\delta^c$ of \eqref{eq:delta1} 
is on this form, with $\kappa(z)  = z$. As in 
\citet{Johnstone1984b}, it turns out that estimators 
of the class \eqref{eq:symform} are admissible 
provided that $\kappa(Z)$ is admissible for $\gamma$ 
under the loss function $(\kappa-\gamma)^2/\gamma$.  
The theorem below is in part a restatement 
of his Theorem 4.1. 

\begin{theorem}
\label{th:admissibility} 
Estimators of the form \eqref{eq:symform} are admissible 
for $(\theta_1,\ldots,\theta_p)$ under $L_c$ 
if and only if $\kappa(Z)$ is admissible for $\gamma$ 
under $L(\gamma,\delta) = (\delta-\gamma)^2/\gamma$.
\end{theorem}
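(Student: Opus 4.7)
The strategy is to invoke the Brown--Farrell characterisation of admissibility used earlier in the section, together with an explicit decomposition of $R_c(\delta,\theta)$ for estimators $\delta$ of the form \eqref{eq:symform}. Conditioning on $Z$ and using that $(Y_1,\ldots,Y_p)\midd Z\sim\text{Multinomial}(Z,\pi)$ with $\pi_i=\theta_i/\gamma$, a direct computation gives
$$
R_c(\delta,\theta)=(1+c)^2\,\E_\gamma\Bigl[w(Z)\frac{(\kappa(Z)-\gamma)^2}{\gamma}\Bigr]+g(\gamma),
$$
where $w(z)=z/\{p-1+(1+c)z\}$ and $g(\gamma)$ does not depend on $\kappa$. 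Two features stand out: $R_c(\delta,\theta)$ depends on $\theta$ only through $\gamma$; and the $\kappa$-dependent part is precisely the risk of $\kappa(Z)$ as an estimator of $\gamma$ under the weighted loss $w(Z)(\kappa-\gamma)^2/\gamma$, with $w(0)=0$ and $w(Z)\in[1/(p+c),\,1/(1+c)]$ for $Z\ge1$.

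For the ``if'' direction, assume $\kappa(Z)$ is admissible for $\gamma$ under $(\kappa-\gamma)^2/\gamma$. By Brown--Farrell there is a sequence of finite measures $q_n$ on $\gamma$ whose one-dimensional Bayes estimators $\kappa_n(z)=K_n(z)/K_n(z-1)$ converge to $\kappa(z)$ and satisfy $\text{BR}(\kappa,q_n)-\text{MBR}(q_n)\to 0$. Form product priors $\nu_n=q_n\otimes U$ on $\theta$, with $U$ the uniform Dirichlet on the simplex; by Lemma \ref{lemma:lemma1} and \eqref{eq:Bayes.symmetric} the $L_c$-Bayes estimator under $\nu_n$ has the form \eqref{eq:symform} with that same $\kappa_n$. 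The decomposition above, combined with a standard Pythagorean identity at the Bayes solution of the scalar problem, gives
$$
\text{BR}(\delta,\nu_n)-\text{MBR}(\nu_n)=(1+c)^2\,\E\bigl[w(Z)(\kappa(Z)-\kappa_n(Z))^2/\kappa_n(Z)\bigr]\le(1+c)\bigl(\text{BR}(\kappa,q_n)-\text{MBR}(q_n)\bigr),
$$
which tends to zero. Admissibility of $\delta$ under $L_c$ then follows from Brown--Farrell, pointwise convergence $\delta_n\to\delta$ being automatic from \eqref{eq:symform}.

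For the ``only if'' direction I proceed by contrapositive: if $\kappa$ is dominated under $(\kappa-\gamma)^2/\gamma$ by some $\tilde\kappa$, I construct $\tilde\delta$ of form \eqref{eq:symform} from $\tilde\kappa$ and show it dominates $\delta$ under $L_c$. The risk gap decomposes as
$$
R_c(\delta,\theta)-R_c(\tilde\delta,\theta)=(1+c)^2\,\E_\gamma\Bigl[w(Z)\bigl\{(\kappa(Z)-\gamma)^2-(\tilde\kappa(Z)-\gamma)^2\bigr\}/\gamma\Bigr],
$$
which is \emph{not} automatically nonnegative from unweighted dominance alone, because $w(Z)$ re-weights different data values. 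Upgrading unweighted dominance to weighted dominance is the main technical obstacle. My plan is to handle it in two moves: first, reset $\tilde\kappa(0)=\kappa(0)$, which does not change $R_c(\tilde\delta,\theta)$ (since $w(0)=0$) and preserves unweighted dominance; second, for $Z\ge1$ the weight $w(Z)$ is bounded strictly between $1/(p+c)$ and $1/(1+c)$, so that a mild truncation or convex-combination modification of $\tilde\kappa$ for $Z\ge 1$ (in the spirit of \citet{Johnstone1984b}) converts strict improvement under $(\kappa-\gamma)^2/\gamma$ into strict improvement under the weighted loss, and hence a uniform strict improvement of $\tilde\delta$ over $\delta$ under $L_c$. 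Equivalently, one argues directly from Brown--Farrell that admissibility under $(\kappa-\gamma)^2/\gamma$ and under $w(Z)(\kappa-\gamma)^2/\gamma$ are the same, since the Bayes estimators coincide and the Bayes risk gaps of the two losses are comparable up to bounded factors, the $Z=0$ contribution being irrelevant to $R_c$. This completes the equivalence.
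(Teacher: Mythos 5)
Your decomposition of $R_c(\delta,\theta)$ by conditioning on $Z$ is exactly the one the paper uses (its $u(z)=(1+c)^2z/\{p-1+(1+c)z\}$ is your $(1+c)^2w(z)$, and the $\kappa$-free remainder is $(p-1)(1+c)\gamma/\{p-1+(1+c)Z\}$), and your ``if'' direction coincides with the paper's: product priors $q_n\otimes\Dir(1,\ldots,1)$, the observation from Lemma~\ref{lemma:lemma1} and \eqref{eq:Bayes.symmetric} that the $L_c$-Bayes rule is of form \eqref{eq:symform} with $\kappa_n(z)=K_n(z)/K_n(z-1)$, the posterior-regret identity $(\kappa-\kappa_n)^2/\kappa_n$, and the bound $u(z)\le 1+c$ giving $\text{BR}_c(\delta,\rho_n)-\text{BR}_c(\delta_n,\rho_n)\le(1+c)\{\text{BR}(\kappa,q_n)-\text{MBR}(q_n)\}\to0$. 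That half is complete and correct.

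The ``only if'' direction is where your proposal stops short. You have correctly isolated the real issue: the decomposition makes $L_c$-admissibility of $\delta$ equivalent to admissibility of $\kappa$ under the \emph{data-weighted} scalar loss $u(Z)(\kappa-\gamma)^2/\gamma$, and unweighted dominance does not transfer termwise because $u$ reweights the values of $Z$. But neither of your two proposed repairs is actually carried out: the ``truncation or convex-combination modification'' is named without any verification that it produces uniform dominance under the weighted loss, and the alternative claim that ``the Bayes risk gaps of the two losses are comparable up to bounded factors'' only yields equivalence of the \emph{Brown--Farrell sufficient condition}, whereas the contrapositive you need (inadmissibility under $L$ implies inadmissibility under $u(Z)L$) requires the necessity half of that characterisation, which is a different and harder statement. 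For what it is worth, the paper's own proof of this direction is equally thin: it proves only that $L_c$-admissibility of $\delta$ forces admissibility of $\kappa$ under $\bar L=u(z)L$, and silently identifies $\bar L$-admissibility with $L$-admissibility, deferring in effect to \citet[Theorem 4.1]{Johnstone1984b} where the weighted-versus-unweighted comparison is handled. So you have diagnosed the gap more honestly than the paper states it, but as written your proposal does not close it; to finish you would need either to import Johnstone's equivalence explicitly or to prove directly that a dominating $\tilde\kappa$ under $(\kappa-\gamma)^2/\gamma$ can be modified (for $z\ge1$, where $u(z)$ is bounded above and below by positive constants) into one dominating under $u(z)(\kappa-\gamma)^2/\gamma$.
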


\begin{proof} 
The risk function of \eqref{eq:symform} under $L_c$ can be written
\beq
R_c(\delta,\theta)
 = \E_\gamma \Bigl\{ \frac{(1+c)^2Z }{p-1+(1+c)Z}L(\gamma,\kappa(Z)) 
   + \frac{(p-1)(1+c)\gamma}
{p-1+(1+c)Z}  \Bigr\},
\label{eq:risk.in.risk}
\eeq
with $L(\gamma,\delta) = (\kappa-\gamma)^2/\gamma$;  
see Appendix \ref{app:A} for the derivation of this claim.
Introduce $u(z) = (1+c)^2z/\{p-1+(1+c)z\}$ 
and write $\bar{L}(\delta,\gamma,z) = u(z)L(\gamma,\delta)$. 
Let $\delta$ and $\delta'$ be of the form \eqref{eq:symform} 
with $\kappa(Z)$ and $\kappa'(Z)$ respectively. If 
$\E_{\gamma} \big\{\bar{L}(\kappa'(Z),\gamma,Z)\big\} 
   -  \E_{\gamma}\big\{ \bar{L}(\kappa(Z),\gamma,Z)\big\} \leq 0$, 
with strict inequality for some $\gamma$, then 
$R_c(\delta,\theta) - R_c(\delta',\theta) \leq 0$ 
with strict inequality for some $\theta$. This shows that 
if $\kappa(Z)$ is inadmissible under $L^*$, then $\delta$ 
is inadmissible under $L_c$. 

The contrapositive statement is more enlightening: 
If $\delta$ is admissible under $L_c$ 
then $\kappa(z)$ is admissible under $\bar{L}$. 
Conversely, assume that $\kappa(Z)$ is admissible for $\gamma$ 
under $L$, let $\{\nu_n(\dd\gamma)\}_{n\geq1}$ be a sequence 
of prior measures satisfying the conditions of 
\citet[Theorem 2.6]{BrownFarrel1988}, and let 
$\rho_n(A)=\Gamma(p)\int_A\nu_n(\dd\gamma)\prod_{i=1}^p \dd\pi_i$ 
be the prior measure over $(0,\infty)\times S$, 
where $S$ is the $(p-1)$-dimensional simplex. Then, 
since $0 < u(z) < 1+c$ for all $z\geq 1$, 
\beq
\text{BR}_c(\delta,\rho_n) - \text{BR}_c(\delta_n,\rho_n) \leq (1+c) 
   \int \E_{\gamma} \{ L(\gamma,\kappa(Z)) 
   -  L(\gamma,\kappa_n(Z)) \}\,\dd\nu_n(\gamma).
\notag
\eeq
The right hand side is non-negative for all $n$ since 
$\delta_n$ is the Bayes solution, and it also 
tends to zero by Theorem 2.6 in \citet{BrownFarrel1988} 
because $\kappa(z)$ is admissible. This implies 
admissibility of $\delta$ under $L_c$. 
\end{proof}

The immediate corollary to Theorem \ref{th:admissibility} 
is that the estimator $\delta^c$ of \eqref{eq:delta1} 
is admissible under $L_c$, because $\kappa(Z) = Z$ 
is admissible under the weighted squared error loss 
function $L(\gamma,\delta) = (\delta - \gamma)^2/\gamma$.
As a special case, the Clevenson--Zidek estimator 
(\ref{eq:czestimator}) is admissible under 
the $\sumip(\delta_i-\theta_i)^2/\theta_i$ loss function. 

\section{Smoothing towards the mean}
\label{section:smoothtomean}

In the following we consider different strategies for 
smoothing the maximum likelihood estimator towards the 
mean of the observations. Pushing the maximum likelihood 
estimator towards a data-based point should in many cases 
yield more reduction in risk than pushing $\delta_0$ towards 
the origin, particularly when the $\theta_i$ are not small  
and not too spread out. This is clearly visible 
in Figure \ref{figure:risk1} where the improvement in 
risk of $\delta^c$ compared to $\delta_0$ 
becomes smaller as $\gamma = \sumip \theta_i$ grows; 
the improvement in risk is, in other words, 
biggest near the `point of attraction'.

\citet*{GhoshHwangTsui1983} considered a modification 
of $\delta_{\text{CZ}}$ that shrinks the maximum likelihood 
estimator towards the minimum of the observations, 
and were able to prove uniform dominance under 
the weighted loss $L_1^*$ of (\ref{eq:twolossfunctions}),  
for their modified estimator. 
\citet{Albert1981} took the Bayes estimator under 
$L_0(\theta,\delta) = \sumip (\delta_i - \theta_i)^2$ 
as his point of departure, and developed an estimator 
that pushes the observations towards the mean $\bar y$
and performs better than the usual estimator
in large parts of the parameter space.  

A complication when working with the $L_c$ loss function
of (\ref{eq:closs}) is that we compete with the maximum likelihood 
estimator on two turfs, so to speak, namely under 
$\sumip(\delta_i-\theta_i)^2/\theta_i$ {\it and} 
under $\big(\sumip\delta_i - \gamma \big)^2/\gamma$.   
One reason for choosing the mean as our `point of attraction' 
is that several of the estimators we construct 
preserve the mean, that is, $\sumip \delta_i(Y) = Z$. 
In this way such new estimators will always `match' 
the risk performance of $\delta_0$ when it comes 
estimating $\gamma$, and the penalty parameter 
$c$ becomes immaterial.         

\subsection{A restricted minimax estimator.}

Consider estimators of the form
\beq
\delta_i(Y) = Y_i - g(Z)(Y_i - \bar Y),
\label{eq:mean.shrink1}
\eeq
where $\bar Y = Z/p$. Notice that 
$\sumip \delta_i = Z = \sumip \delta_{0,i}$, which means that 
in calculations of the difference in risk between estimators 
of the form \eqref{eq:mean.shrink1} and $\delta_ 0 = Y$, 
the $c$-term in the $L_c$ loss function disappears.
The value of $g(0)$ is immaterial, since $\delta$ is then 
equal to zero, and we may take $g(0)=0$ for convenience. 

Using that $Y$ given $Z=z$ is multinomial with cell 
probabilities $\pi_i$, for each $z\ge1$, 
the risk difference $R_c(\delta,\theta) - R_c(\delta_0,\theta)$ 
can be expressed as 
\beqn
d(\theta)  
& =&\E_\theta\sumip \frac{1}{\theta_i} 
   \left\{ g(Z)^2(Y_i - Z/p)^2 - 2g(Z)(Y_i - Z/p)(Y_i - \theta_i) \right\}\\
& =& \E_\theta \sumip \left[   g(Z)^2\left\{  Z\frac{1-\pi_i}{\gamma} 
   + \pi_i \frac{Z^2}{\gamma} - 2\frac{Z^2}{p\gamma} 
   + \frac{Z^2}{p^2\theta_i}       \right\}   \right. \\   
& &\hspace{2cm} \left.  - 2g(Z)\left\{ Z\frac{1-\pi_i}{\gamma} 
   + \pi_i \frac{Z^2}{\gamma} - Z\pi_i - \frac{Z^2}{p\gamma} 
   + \frac{Z}{p}    \right\}      \right]\\
& =& \E_\theta \left[ g(Z)^2 \left\{ Z\frac{p-1-Z}{\gamma}  
   + \frac{Z^2}{p^2\gamma} \sumip\pi_i^{-1} \right\} 
 - 2g(Z)  Z\frac{p-1}{\gamma}    \right]\\
& =& \E_\theta \left[g(Z)^2 \left\{ Z\frac{p-1-Z}{\gamma}  
   + \frac{Z^2}{p\gamma} B(\pi) \right\} 
   - 2g(Z)Z\frac{p-1}{\gamma}  \right]\\
& =& \E_\theta \Bigl( g(Z+1)^2\left[p-1 + (Z+1)\{B(\pi)/p -1\}\right]
   - 2(p-1)g(Z+1) \Bigr), 
\eeqn
writing 
\beq
\label{eq:Bofpi}
B(\pi) = p^{-1} \sumip \pi_i^{-1}
\eeq  
for the mean of the inverse proportions. It is not possible 
to find a function $g(\cdot)$ such that the estimator in 
\eqref{eq:mean.shrink1} dominates $\delta_0 = Y$ over 
the entire parameter space $\Theta = (0,\infty)^p$. 
This is the `tyranny of the boundary' phenomenon; 
as a single $\theta_i \to 0$ the sum $B(\pi) \to \infty$ 
and the risk blows up for non-null choices of $g$. 

Estimators can be found, however, that dominate 
$\delta_0$ over large proportions of the parameter space. 
Let $\Theta(b_0)\subset \Theta$ be the subset of the 
parameter space where $B(\pi) \leq pb_0$. The minimum 
value of $B(\pi)$ is $p$, so $b_0 > 1$.  
In some situations one might have prior grounds for 
believing that the $\theta_i$ are somewhat similar, 
that is, not too far from the mean $\bar\theta$. One 
might for example have that each 
$\theta_i \geq \eps \bar{\theta} = \eps\gamma/p$, 
or equivalently $\pi_i \geq \eps/p$, for 
some small $\eps > 0$. This implies that 
$B(\pi) \leq p/\eps$, so $b_0=1/\eps$ may be used, 
so the risk difference can be bounded:  
\beq
d(\theta)\leq \E_\theta\,g(Z+1) 
   \left\{g(Z+1)[p-1+(Z+1)(b_0 -1)]  - 2(p-1)  \right\}.
\label{eq:b0ineq}
\eeq
Based on this upper bound we derive the estimator with 
\beq
\delta^*_i(Y) = Y_i - \frac{p-1}{p-1+(b_0 - 1)Z} (Y_i - \bar Y)
   \quad {\rm for\ }i=1,\ldots,p. 
\label{eq:mean.shrink2}
\eeq
This estimator succeeds in having 
\beqn
R_c(\delta^*,\theta) 
&\leq& 
   R_c(\delta_0,\theta) - \E_\theta \frac{(p-1)^2}{p-1+(b_0 - 1)(Z+1)} \\
& < & p+c - (p-1)^2/\{p-1 + (b_0 -1)(\gamma + 1)\}
\eeqn 
for all $\theta \in \Theta(b_0)$, 
with the last inequality following from Jensen's. 

\subsection{A restricted Bayes estimator.}

The estimator in \eqref{eq:mean.shrink2} was derived 
with the aim of risk function dominance in a given 
large region $\Theta(b_0)$ of the parameter space. 
We may also derive the restricted Bayes solution, 
that is, the best estimator among those of the 
form \eqref{eq:mean.shrink1}, under a prior 
of the type discussed in Section \ref{section:Bayes}. 
Since $\gamma$ and $\pi$ are independent, 
the Bayes risk of such an estimator is 
\beqn
\text{BR}_c(\delta) 
   &=& p + c - \E^q\,\E_{\gamma}\big\{2(p-1)g(Z+1) \\
& & \qquad  
   - g(Z+1)^2[p-1 + \{\E^r\,B(\pi)/p - 1\}(Z+1)] \big\}, 
\eeqn
where $\E^q\,(\cdot)$ and $\E^r\,(\cdot)$ are the expectations 
of $\gamma$ and $\pi$ with respect to their prior 
distributions, cf.~Lemma \ref{lemma:lemma1}. 
If we now let $\E^r\,B(\pi)=pb_0$, 
this reproduces the estimator in \eqref{eq:mean.shrink2}, 
but with a differently interpreted $b_0$. The risk 
function is 
\beq
R_c(\delta^B,\theta) = p + c
-(p-1)\,\E_{\gamma}\,g(Z+1) 
   \Bigl[2 - \frac{p-1+\{B(\pi)/p - 1\}(Z+1)}{p-1+(b_0-1)(Z+1)} \Bigr],
\notag
\eeq    
with $g(z) = (p-1)/\{p-1+(b_0 -1)z\}$. Consider again the prior 
construction of Section \ref{section:Bayes}. If $(\pi_1,\ldots,\pi_p)$
comes from a Dirichlet distribution with parameters 
$(\alpha,\ldots,\alpha)$, then $b_0 = (\alpha - 1/p)/(\alpha-1)$,  
provided that $\alpha > 1$. Recall that the inequality in 
\ref{eq:b0ineq} only holds if $b_0 >1$, which means that the 
subspace $\Theta(b_0)$ is empty if our prior knowledge 
dictates $0 < \alpha \leq 1$. 
On the other side of the spectrum, 
as $\alpha \to \infty$ this estimator would assign the estimate 
$1/p$ to each of the proportions, and since $b_0$ goes to one 
as $\alpha$ goes to infinity, the estimator \eqref{eq:mean.shrink2} 
tends to $\bar y = z/p$. In other words, having 
smaller values of $\alpha > 1$ expands the space 
$\Theta(b_0)$ but results in less gain in risk.  


The (\ref{eq:mean.shrink2}) estimator may also use 
a data-based value for $b_0$. With $\pi$ and $\gamma$ 
independent, the marginal distribution of the data is   
\beqn
\int_S\int_0^{\infty} f(y\midd\pi,\gamma) r(\pi) 
   q(\gamma)\,\dd\gamma\,\dd\pi_1 \cdots \dd\pi_{p-1} 
\propto \frac{\Gamma(p\alpha)}{\Gamma(\alpha)^p} 
   \frac{\prod_{i=1}^p\Gamma(\alpha + y_i)}{\Gamma(p\alpha + z)}.
\eeqn 
This likelihood can be maximised to obtain an estimate 
$\hatt\alpha$ which is plugged into $b_0=(\alpha - 1/p)/(\alpha-1)$, 
again provided that $\hatt\alpha > 1$.          

\subsection{More careful smoothing towards the mean.}

We now consider the construction
\beqn
\hatt\theta_i=Y_i-g(Z)(Y_i-\bar Y)h(Y), 
\eeqn
where $h(y)$ is a function such that if one or more 
$y_i=0$, then $h(y)=h(y_1,\ldots,y_p)=0$. The intention
is that of more careful smoothing towards the mean 
than with (\ref{eq:mean.shrink1}), to achieve risk improvement
in potentially larger parts of the parameter space. 
Note that $\sumip\hatt\theta_i=\sumip y_i$, so any risk
difference $R_c(\hatt\theta,\theta)-R_c(Y,\theta)$
does not depend on the $c$ term of the loss function. 
Also, the value of $g(z)$ at $z=0$ is immaterial,
so we may take $g(0)=0$, for convenience. 

To work with the risk functions, we start from 
\beqn
(\hatt\theta_i-\theta_i)^2-(y_i-\theta_i)^2
   =g(z)^2(y_i-\bar y)^2h(y)^2
   -2g(z)(y_i-\bar y)h(y)(y_i-\theta_i). 
\eeqn
The risk difference $R_c(\hatt\theta,\theta)-R_c(Y,\theta)$ 
may hence be expressed as 
\beqn
\E_\theta\sumip \{{g(Z)^2(Y_i-\bar Y)^2h(Y)^2
   -2g(Z)(Y_i-\bar Y)h(Y)(Y_i-\theta_i)}\}/\theta_i 
   =\E_\theta\,D(Y), 
\eeqn 
with 
\beqn
D(y)
&=&\sumip\Bigl[{g(z+1)^2\{y_i+1-(z+1)/p)\}^2 h(y+e_i)^2\over y_i+1} \\
& &\qquad 
   -2g(z+1)\{y_i+1-(z+1)/p\}h(y+e_i)
   +2g(z)(y_i-\bar y)h(y) \Bigr] \\
&=&g(z+1)^2 \Bigl\{\sumip (y_i+1)h(y+e_i)^2 \\
& & \qquad -2{z+1\over p}\sumip h(y+e_i)^2
   +{(z+1)^2\over p^2}\sumip {h(y+e_i)^2\over y_i+1}\Bigr\} \\
& &\qquad 
   -2g(z+1)\Bigl\{\sumip(y_i+1)h(y+e_i)-{z+1\over p}\sumip h(y+e_i)\Bigr\}.
\eeqn 
The property that $y_i=0$ implies $h(y)=0$ is actively
used here; without such a constraint, more complicated
terms need to be added to the $\E_\theta\,D(Y)$ here. 

Several choices may be considered and worked with
for further fine-tuning, regarding the $h(y)$ function.
For the present report we limit attention to the special 
case of $h_0(y)=I\{{\rm each\ }y_i\ge1\}$. We need to study 
and bound the $D(y)$ function with this choice of $h(y)$. 
Note that $\sumip h_0(y+e_i)$ is $p$, if all $y_i\ge1$; 
is 1, if one and only one of $y_i=0$;  
and is 0, otherwise. Similarly, 
$\sumip y_ih_0(y+e_i)=z$, if all $y_i\ge1$; 
and 0, otherwise. Furthermore, 
\beqn
\sumip{h_0(y+e_i)\over y_i+1}
   \begin{cases}
   \le p/2 &\quad{\rm if\ all\ }y_i\ge1, \\
   =1      &\quad{\rm if\ only\ one\ }y_i=0, \\
   =0      &\quad{\rm otherwise}.
   \end{cases} 
\eeqn
(i) Assume first that all $y_i\ge1$. Then 
\beqn
D(y)
&=&g(z+1)^2 \Bigl\{z+p-2(z+1)+{(z+1)^2\over p^2}
   \sumip{1\over y_i+1}\Bigr\}
   -2g(z+1)(p-1) \\
&\le&g(z+1)^2\Bigl\{p-1-(z+1)+\half{(z+1)^2\over p}\Bigr\}
   -2g(z+1)(p-1). 
\eeqn 
The function $p-1-x+\half x^2/p$ for $x\ge1$
has its minimum value at position $x=p$,
with minimum value $\half p-1$, which is positive 
as long as $p\ge3$. 
(ii) When there is only one $y_i=0$, the rest $y_j\ge1$, 
we find
\beqn 
D(y)
&=&g(z+1)^2 \Bigl\{0 + 1 -2{z+1\over p}1
   +{(z+1)^2\over p^2}1\Bigr\}
   -2g(z+1)\Bigl\{0+1-{z+1\over p}0\Bigr\} \\
&=&g(z+1)^2\Bigl\{-2{z+1\over p}
   +{(z+1)^2\over p^2}\Bigr\}
   -2g(z+1). 
\eeqn 
(iii) Otherwise, which means that the number 
of $y_i=0$ is between 2 and $p-1$, 
we find that $D(y)=0$. 

Our best choice for $g(z)$, based on the upper risk bound
for the case of all $y_i\ge1$, is 
\beqn
g_0(z+1)={p-1\over p-1-(z+1)+\half(z+1)^2/p},
   \quad{\rm or}\quad
g_0(z)={p-1\over p-1-z + \half z^2/p}. 
\eeqn
The estimator 
\beq
\label{eq:coolnewguy}
\begin{array}{rcl}
\hatt\theta_i
&=&\displaystyle 
Y_i-g_0(Z)(Y_i-\bar Y)h_0(Y) \\ 
&=&\displaystyle 
Y_i-{p-1\over p-1-Z + \half Z^2/p}(Y_i-\bar Y)I\{{\rm all\ }Y_i\ge1\}, 
\end{array}
\eeq 
therefore, has risk function 
$R_c(\hatt\theta)=p+c+\E_\theta\,D_0(Y)$, where an exact expression
for $D_0(Y)$ is found via the above. 
We also know that 
\beqn
D_0(y)\le-{(p-1)^2\over p-1-(z+1)+\half (z+1)^2/p} 
   \quad {\rm if\ all\ }y_i\ge1\,;
\eeqn 
that 
\beqn
D_0(y)
&=&\Bigl\{{p-1\over p-1-(z+1)+\half(z+1)^2/p}\Bigr\}^2
   \Bigl\{-2{z+1\over p}+{(z+1)^2\over p^2}\Bigr\} \\
& &\qquad\qquad 
   -2{p-1\over p-1-(z+1)+\half(z+1)^2/p},  
\eeqn 
for the cases of $y$ where precisely one $y_i=0$,
the other $y_j\ge1$; and finally that $D_0(y)=0$
for those $y$ for which the number of $y_i=0$
is between 2 and $p-1$. 
As long as all $\theta_i$ are at least moderate, 
so that $h_0(Y)=1$ with high probability, there 
is clear risk improvement on the minimax risk $p+c$. 
The corner cases, however, where one $\theta_i$
is small and the others not, are the `bad cases'
for the (\ref{eq:coolnewguy}) estimator, 
where the risk might become larger than $p+c$. 
Since $D_0(y)$ flattens to zero when $z=\sumip y_i$
increases, the risk converges to $p+c$ 
for all $\theta=\gamma(\pi_1,\ldots,\pi_p)$ where 
$\gamma$ tends to infinity.

\section{Bayes and empirical Bayes with more structure}
\label{section:moreempiricalbayes}

In the previous sections we have constructed and analysed
estimators essentially symmetric in the observations.
Sometimes some structure is anticipated in the parameters, 
however, as with setting up regressions or log-linear models 
for Poisson tables, see e.g.~\citet{Agresti2019}, 
or with classes of priors. The present section briefly 
complements our earlier efforts by examining risk function 
consequences for estimators that favour asymmetric 
structures. 

\subsection{Risk functions with Gamma priors.}

A natural class of priors takes independent 
Gamma priors, with parameters $(\alpha_i,\beta)$,
for the $\theta_i$. As was seen in Section \ref{section:closs}, 
the Bayes estimator then takes the form 
\beq
\label{eq:nils31} 
\delta_i^B=h_c(z){\alpha_i+y_i-1\over \beta+1}
   \quad {\rm with} \quad 
   h_c(z)={(1+c)(p\bar\alpha+z-1)\over (1+c)(p\bar\alpha+z-1)-c(p-1)},
\eeq
writing $\bar\alpha$ for $(1/p)\sumip\alpha_i$; 
also, for cases where $\alpha_i<1$ and $y_i=0$, 
the estimator is zero. 

The present task is to study the consequent risk functions, 
for such estimators, outside the special and simplest case 
where each $\alpha_i=1$. The point will be that 
estimators then lose the minimax property, with 
the risk exceeding the $p+c$ benchmark level 
when one or more of the $\theta_i$ come close to zero,
but that the risk otherwise can be lower than $p+c$
in big and reasonable parts of the parameter space. 
For simplicity of presentation we restrict attention
here to the case of $c=0$. Similar results 
and insights may be reached for the general loss 
function $L_c$, using more laborious methods we develop 
and exploit for somewhat different purposes in 
Section \ref{subsection:riskagain}. 
For the ensuing estimator 
$(\alpha_i+Y_i-1)/(\beta+1)$, some calculations yield 
\beqn
R(\delta^B,\theta)
&=&\sumip {1\over \theta_i}\Bigl[\Bigl\{{\alpha_i+\theta_i-1-(\beta+1)\theta_i
   \over \beta+1}\Bigr\}^2+{\theta_i\over (\beta+1)^2}\Bigr] \\
&=&{1\over (\beta+1)^2}
   \Bigl\{\sumip {(\alpha_i-1-\beta\theta_i)^2\over\theta_i}+p\Bigr\}. 
\eeqn 
This is smaller than or equal to $R(Y,\theta)=p$ when 
\beqn
A(\theta)={1\over p}\sumip {(\alpha_i-1-\beta\theta_i)^2\over \beta\theta_i}
   \le {(\beta+1)^2-1\over \beta}=2+\beta. 
\eeqn 
This defines a fairly large parameter region, 
$\{\theta\colon A(\theta)\le 2+\beta\}$, 
with the $1/\theta_i$ not being too far away from 
the prior mean values $\beta/(\alpha_i-1)$, 
and where using the Bayes estimator hence is better 
than with the $\delta_0=Y$ method. Under the prior itself, 
the random $A(\theta)$ has mean 
\beqn 
{1\over p}
   \sumip \Bigl\{{(\alpha_i-1)^2\over \beta}{\beta\over \alpha_i-1}
   -2(\alpha_i-1)+\beta{\alpha_i\over \beta}\Bigr\}=1,
\eeqn 
and variance of order $O(1/p)$, showing that 
only rather unlikely values of $\theta$ will have risk 
above the benchmark $p$. 

\subsection{Risks for a class of empirical Bayes estimators.}
\label{subsection:riskagain} 

With the independent Gamma priors used above 
we next note that the marginal distribution of $y_i$ becomes 
\beqn
g(y_i\midd\alpha_i,\beta)
   &=&\int_0^\infty {\beta^{\alpha_i}\over \Gamma(\alpha_i)}
   \theta_i^{\alpha_i-1}\exp(-\beta\theta_i)
   {1\over y_i!}\exp(-\theta_i)\theta_i^{y_i}\,\dd\theta_i \\
&=&{\beta^{\alpha_i}\over \Gamma(\alpha_i)}{1\over y_i!}
   {\Gamma(\alpha_i+y_i)\over (\beta+1)^{\alpha_i+y_i}}, 
\eeqn
for $y_i=0,1,2,\ldots$. In the setup where the $\alpha_i$
are taken known but $\beta$ an unknown parameter, 
we see that $Z=\sumip Y_i$ is sufficient. Since 
$Z$ given the parameters is Poisson with mean 
$\gamma=\sumip\theta_i$, and $\gamma$ is Gamma $(p\bar\alpha,\beta)$, 
writing $\bar\alpha=(1/p)\sumip\alpha_i$, its distribution 
may be written 
\beqn
q(z\midd\beta)={\beta^{p\bar\alpha}\over \Gamma(p\bar\alpha)}
   {1\over z!}{\Gamma(p\bar\alpha+z)\over (\beta+1)^{p\bar\alpha+z}}
   \quad {\rm for\ }z=0,1,2,\ldots. 
\eeqn
From this we can derive 
\beq
\label{eq:nils15}
\E\,{Z\over p\bar\alpha-1+Z}
   =\sum_{z=1}^\infty {1\over (z-1)!}{\beta^{p\bar\alpha}\over \Gamma(p\bar\alpha)}
   {\Gamma(p\bar\alpha+z-1)\over (\beta+1)^{p\bar\alpha+z-1}}{1\over \beta+1}
   ={1\over \beta+1},  
\eeq
provided only that $p\ge2$. Hence $Z/(p\bar\alpha-1+Z)$
can be used as an estimator for the quantity $1/(\beta+1)$.
With pre-specified $\alpha_i$, then, a natural
empirical Bayes version of estimator (\ref{eq:nils31}) emerges: 
\beq
\label{eq:nils32} 
\begin{array}{rcl}
\hatt\theta_i
&=&\displaystyle
{(1+c)(p\bar\alpha-Z-1)\over (1+c)(p\bar\alpha+Z-1)-c(p-1)}
   {Z\over p\bar\alpha-1+z}(\alpha_i+Y_i-1) \\
&=&\displaystyle 
h_c(Z)(\alpha_i+Y_i-1), 
\end{array}
\eeq
with 
\beqn
h_c(Z)={(1+c)Z\over (1+c)(p\bar\alpha+Z-1)-c(p-1)}. 
\eeqn 
In particular, under $L_0$ loss, with $c=0$, the 
natural empirical Bayes estimator is 
\beq
\label{eq:nils11}
\hatt\theta_i=h_0(Z)(\alpha_i+Y_i-1)
   ={Z\over p\bar\alpha+Z-1}(\alpha_i+Y_i-1), 
\eeq 
generalising the earlier symmetric case with all $\alpha_i=1$,
which yields the already studied minimax and admissible 
estimator (\ref{eq:emilnilsestimator}). 

Expressions for the risk function $R_c(\hatt\theta,\theta)$
can now be worked out, using the fact that 
$y_i\midd z$ is binomial $(z,\pi_i)$, with $\pi_i=\theta_i/\gamma$. 
Consider the general class of estimators 
\beq 
\label{eq:nils12}
\hatt\theta_i=h(Z)(\alpha_i+Y_i-1)
   \quad {\rm for\ }i=1,\ldots,p.
\eeq 
The $h(z)$ functions we are encountering all have $h(0)=0$, 
and will in fact have the form $q(z)z$, for suitable $q(z)$. 
Also, they will be nondecreasing with $h(z)\arr 1$ as $z$ increases. 
The task now is to develop formulae for their risk 
functions, through suitable representations of the form 
\beqn
R(\hatt\theta,\theta)=\E_\theta\,H(Z,\theta)
   =\sum_{z=1}^\infty H(z,\theta)\exp(-\gamma)\gamma^z/z!, 
\eeqn
and then showing, for relevant choices of $h(z)$, 
that this is less than $p+c$ for large and 
relevant parameter regions. 

We start from 
\beqn
\E_\theta\,{1\over\theta_i}\{h(z)(\alpha_i+Y_i-1)-\theta_i\}^2\midd z
&=&{1\over \gamma\pi_i}
   [\{h(z)(\alpha_i+z\pi_i-1)-\gamma\pi_i\}^2 \\
& &\qquad\qquad +\,h(z)^2z\pi_i(1-\pi_i)] \\
&=&{1\over \gamma\pi_i} \bigl\{[h(z)(\alpha_i-1)+\{h(z)z-\gamma\}\pi_i]^2 \\
& &\qquad\qquad +\,h(z)^2z\pi_i(1-\pi_i)\bigr\}.
\eeqn 
For the case of $c=0$ this leads to 
\beqn
H(z,\theta)
&=&h(z)^2\sumip{(\alpha_i-1)^2\over \gamma\pi_i}
   +\{h(z)z-\gamma\}^2{1\over\gamma} \\
& &\qquad\qquad
   +\,2\{h(z)z-\gamma\}h(z)(p\bar\alpha-p){1\over \gamma}
   +h(z)^2z(p-1){1\over \gamma}. 
\eeqn 
For the `$c$ term' part of the risk, 
note that $\hatt\theta_i=h(z)(p\bar\alpha+z-p)$, 
leading to 
\beqn
\E_\gamma\,{\{h(Z)(p\bar\alpha+Z-p)-\gamma\}^2\over \gamma} 
=\E_\gamma\,{ \{h(Z)Z-\gamma + h(Z)(p\bar\alpha-p)\}^2\over \gamma}. 
\eeqn 
This may be exploited further using the identity 
$\E_\gamma\,r(Z)/\gamma=\E_\gamma\,r(Z+1)/(Z+1)$  
for functions $r(z)$ with $r(0)=0$. 

For brevity we limit attention here to the case 
of $c=0$; extensions can be worked out using 
the same methods. We use the identity pointed to for 
\beqn
\{h(z)z-\gamma\}^2/\gamma 
   =h(z)^2z^2/\gamma-2h(z)z+\gamma, 
\eeqn 
and find $R(\hatt\theta,\theta)=\E_\gamma\,Q(Z,\theta)$, with 
\beqn
Q(z,\theta)&=&h(z)^2\sumip{(\alpha_i-1)^2\over \gamma\pi_i}
   +h(z+1)^2(z+1)-2h(z)z+\gamma \\ 
& & \qquad
   +\,2{ \{h(z+1)(z+1)-\gamma\}h(z+1)\over z+1}(p\bar\alpha-p)
   +h(z+1)^2(p-1). 
\eeqn 
The risk function may hence be expressed as  
\beq
R(\hatt\theta,\theta)
   =pm_1(\gamma)\,C(\pi)/\gamma
   +R_1(\gamma)+2(p\bar\alpha-p)R_2(\gamma)+(p-1)m_2(\gamma), 
\label{eq:nils14}
\eeq
in which $C(\pi)=(1/p)\sumip (\alpha_i-1)^2/\pi_i$, and 
\beqn
m_1(\gamma)&=&\E_\gamma\,h(Z)^2, \\
m_2(\gamma)&=&\E_\gamma\,h(Z+1)^2, \\
R_1(\gamma)&=&\E_\gamma\,\{h(Z+1)^2(Z+1)-2h(Z)Z+\gamma\}, \\
R_2(\gamma)&=&\E_\gamma\,{\{h(Z+1)(Z+1)-\gamma\}h(Z+1)\over Z+1}.
\eeqn  
Here $m_1(\gamma)$ and $m_2(\gamma)$ are inside $(0,1)$, 
and increase to 1; the $R_1(\gamma)$ and $R_2(\gamma)$ 
are bounded and converge to respectively one and zero
as $\gamma$ increases. For larger $\gamma$, therefore, 
the risk goes to the minimax risk $p$. 
The risk function (\ref{eq:nils14}) may exceed 
the minimax threshold level $p$ if one or more
of the $\pi_i=\theta_i/\gamma$ are small, but even
for small $\pi_i$ the risk decreases with increasing
$\gamma$. Otherwise the situation is that the risk 
may become significantly smaller than $p$ in 
parts of the parameter space not disagreeing much
from what is judged likely under the prior, and 
that it can be smaller than $p$ also in other 
larger parameter regions. An upper bound is 
\beqn
R(\hatt\theta,\theta)
   \le pm_1(\gamma)\{\max_{i\le p}|\alpha_i-1|\}^2B(\pi)/\gamma
   +R_1(\gamma)+2(p\bar\alpha-p)R_2(\gamma)+(p-1)m_2(\gamma), 
\eeqn 
with $B(\pi)$ as in (\ref{eq:Bofpi}). 
This may in particular be investigated further,
with the choice $h_0(z)$, corresponding to 
the estimator (\ref{eq:nils11}). The risk function
is bounded; converges to the minimax value $p$
when $\gamma$ increases, regardless of proportions 
$(\pi_1,\ldots,\pi_p)$; may offer substantial improvement
for sizeable portions of the parameter space; 
and its maximum value is often not much bigger 
than $p$.  

\begin{figure}[ht]
\centering
\includegraphics[scale=0.55,angle=0]{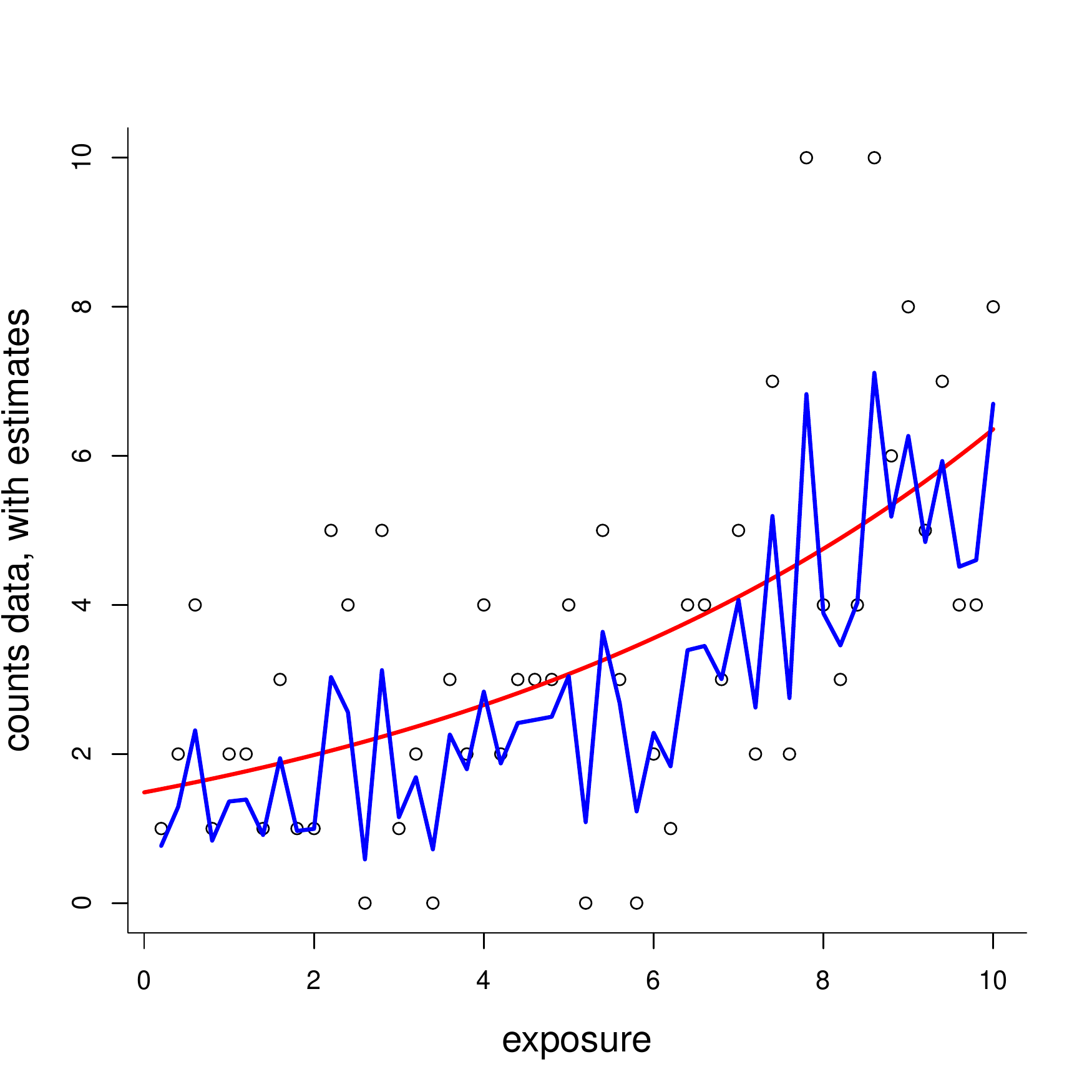}
\caption{Estimating $p=50$ Poisson parameters, 
in a simulated regression setting with $(x_i,y_i)$ data,  
with the empirical Bayes method (\ref{eq:nils11}),  
shrinking the raw estimates $y_i$ towards
prior means $\alpha_i/\beta$, with $\beta$ estimated 
from the data.} 
\label{figure:figure2}
\end{figure}

An illustration of the empirical Bayes strategy (\ref{eq:nils11})
is provided in Figure \ref{figure:figure2}, in 
a situation with simulated regression data $(x_i,y_i)$.
The prior takes the $\theta_i$ to stem from Gamma 
$(\alpha_i,\beta)$, with $\alpha_i=\exp(\gamma_0+\gamma_1 x_i)$,
for suitable prior guess values $(\gamma_0,\gamma_1)$,
and then estimates $\beta$ from data, 
as per (\ref{eq:nils15}). 
Similar Bayes and empirical Bayes methods can be developed
for priors of the type $\theta_i\sim\Gam(d\theta_{0,i},d)$,
with either hyperpriors on the prior parameters 
$\theta_{0,i}$ and $d$, or involving estimators for 
these from the data. 

\section{Estimation with a weighted loss function}
\label{section:weightedloss}

Above we have worked with our loss function $L_c$, 
a natural extension of the Clevenson--Zidek loss function
$L_1^*(\theta,\delta)=\sumip(\delta_i-\theta_i)^2/\theta_i$,
to account for not shrinking the mean too much. 
Another useful extension is to allow for weighting, with 
\beqn
L_w(\theta,\delta)=\sumip w_i(\delta_i-\theta_i)^2/\theta_i, 
\eeqn 
where $w_1,\ldots,w_p$ are fixed, positive, and context 
driven, reflecting relative importance. This is 
e.g.~important when the $y_i$ result from different
levels of exposure, as with $y_i$ stemming from a 
Poisson with parameter $w_i\theta_i$. The $\delta_0=Y$ 
estimator is again the natural benchmark; it has 
constant risk $w_0=\sumip w_i$, and it is minimax. 
To prove this second claim, one may work with the 
prior where the $\theta_i$ are independent and 
Gamma distributed, with parameters $(\alpha,\beta)$,
and with $\alpha\ge1$ to avoid a certain technical issue. 
Some work shows that the Bayes estimator becomes 
$\hatt\theta_i=1/\E\,(\theta_i\midd y)=(\alpha+y_i-1)/(\beta+1)$,
with associated minimum Bayes risk as simple as $w_0/(\beta+1)$.
Letting $\beta\arr0$ we have convergence to the 
constant risk of $\delta_0$. 

This estimator may be uniformly improved upon, however,
as we now demonstrate, yielding another generalisation
of the Clevenson--Zidek estimator. Consider 
estimators of the form 
\beq
\label{eq:newestimators}
\hatt\theta_i=\{1-\phi(V)\}Y_i 
   \quad {\rm for\ }i=1,\ldots,p, 
   \quad {\rm where\ }V=v(Y)=\sumip w_iY_i. 
\eeq
Using the identity (\ref{eq:useful.identity}) we may 
express the risk difference $r_w$ between $\hatt\theta$ and $\delta_0$, 
i.e.~$\E_\theta\,\{L_w(\theta,\hatt\theta)-L_w(\theta,\delta_0)\}$, as
\beqn
r_w&=&\E_\theta\sumip {w_i\over \theta_i}
   \{\phi(V)^2Y_i^2-2\phi(V)Y_i(Y_i-\theta_i)\} \\
&=&\E_\theta\sumip w_i\bigl[\{\phi(v+w_i)^2(Y_i+1) 
   - 2\phi(v+w_i)(Y_i+1)+2\phi(V)Y_i\bigr]
   =\E_\theta\,D_w(Y), 
\eeqn 
say, where 
\beqn
D_w(y)=\sumip w_i\{\phi(v+w_i)^2-2\phi(v+w_i)\}(y_i+1)+2\phi(v)v. 
\eeqn
As we have argued on previous occasions in our paper, 
if we succeed in finding a function $\phi(v)$
such that $D_w(y)\le0$ for all $y$, with strict 
inequality for at least one $y$, we have demonstrated
inadmissibility of $\delta_0=Y$. 

To work with this we set up some mild requirements regarding
the $w_i$ weights. We take all $w_i$ to be inside some 
$[a,b]$ interval, situated inside $(0,1)$, 
and also stipulate that $w_0=\sumip w_i>1$. 
For estimators of the form (\ref{eq:newestimators}), consider 
\beqn 
\phi(v(y))={\psi(v(y))\over w_0-1+v(y)},
\eeqn 
with (i) $\psi(v)$ nondecreasing, 
(ii) with $0<\psi(v)<2(w_0-1)$ for all $v=v(y)$, and 
(iii) such that $v\phi(v)$ is increasing. 
We then find 
\beqn
D_w(y)
&=&\sumip w_i 
    {\psi(v+w_i)\over w_0-1+v+w_i}
    \Bigl\{{\psi(v+w_i)\over w_0-1+v+w_i}-2\Bigr\}(y_i+1)+2\phi(v)v \\
&\le&\sumip w_i 
    {\psi(v+a)\over w_0-1+v+b}
    \Bigl\{{\psi(v+w_i)\over w_0-1+v+w_i}-2\Bigr\}(y_i+1)+2\phi(v)v \\ 
&\le&\sumip w_i 
    {\psi(v+a)\over w_0-1+v+b}
    \Bigl\{{\psi(v+b)\over w_0-1+v+a}-2\Bigr\}(y_i+1)+2\phi(v)v \\
&=&{\psi(v+a)\over w_0-1+v+b}
   \Bigl\{{\psi(v+b)\over w_0-1+v+a}-2\Bigr\}(w_0+v)+2\phi(v)v \\
&\le&\psi(v+a)\Bigl\{{\psi(v+b)\over w_0-1+v+a}-2\Bigr\}+2\phi(v)v \\
&=&\psi(v+a)\{\psi(v+b)-2(w_0-1)\}-2\{\phi(v+a)(v+a)-\phi(v)v\}, 
 \eeqn
which is demonstrably negative. Our preferred generalisation 
of the Clevenson--Zidek estimator, 
to the present case of weighted loss, becomes
\beq
\label{eq:newguy}
\hatt\theta_i=\Bigl\{1-{w_0-1\over w_0-1+v(Y)}\Bigr\}Y_i
   \quad {\rm for\ }i=1,\ldots,p. 
\eeq 
The special case of equal weights $w_i=1$ leads back
to the Clevenson--Zidek estimator (\ref{eq:czestimator}). 

\def\sumibig{{\sum_{i=1}^\infty}}

Remarkably, the apparatus above allows extension to the case
of infinitely many Poisson parameters. Suppose $Y_1,Y_2,\ldots$
are independent Poisson counts with means $\theta_1,\theta_2,\ldots$,
and that loss incurred by estimators $\delta_1,\delta_2,\ldots$
is taken to be 
$L(\theta,\delta)=\sumibig w_i(\delta_i-\theta_i)^2/\theta_i$.
Here the sequence of weights is such that $w_0=\sumibig w_i$
is finite, and the parameter space to be considered 
is $\Omega$, the set of sequences of $\theta_i$ for which 
$\sumibig w_i\theta_i$ is finite (including in particular
each bounded sequence). 

The benchmark procedure is again $\delta_0$, 
with components $\delta_{0,i}=y_i$. It has constant risk $w_0$, 
and our previous arguments may be extended to demonstrate 
that this procedure is minimax. Also, crucially, the estimator 
\beq
\label{eq:infinite}
\hatt\theta_i=\Bigl\{1-{w_0-1\over w_0-1+v(Y)}\Bigr\}Y_i
   \quad {\rm for\ }i=1,2,3,\ldots 
\eeq 
offers uniform risk improvement over $\delta_0$,
where now $v(y)=\sumibig w_iy_i$.  
This follows from arguments used to reach 
the corresponding statement for the finitely-many
procedure (\ref{eq:newguy}), in the light of 
two necessary remarks. The first is that the 
identity (\ref{eq:useful.identity}) continues to hold, 
for all $h(Y_1,Y_2,\ldots)$ with finite mean,
with the property that $h(y)=0$ as long as $y_i=0$.
The second is that the upper bound reached above
for $D_w(y)$, under the condition that $a\le\theta_i\le b$
for all $i$, applies here too, but now we need $a=0$,
since the $\sumibig w_i$ is finite. In other words,   
\beqn
D_w(y)\le\psi(v(y))\{\psi(v(y)+b)-2(w_0-1)\} 
\eeqn
remains correct. 

Above we stipulated a scaling of the importance weights
$w_i$ so that their sum $w_0=\sumibig w_i$ is above 1.
This is partly in order for the estimator (\ref{eq:infinite})
to be a natural generalisation of the Clevenson--Zidek 
estimator. Similar reasoning goes through for estimators 
$[\psi(v(Y))/\{w_0 - \eps + v(Y)\}]\,Y$, if we instead
stipulate $w_0>\eps$. 

\section{Multivariate models for count variables} 
\label{section:extra} 


The methods developed in Section \ref{section:Bayes}  
utilised certain constructions which also involve 
multivariate models for rates and for count variables, 
of interest in their own right. Models can be built
with both positive and negative correlations 
betwen rates $\theta_i$ and between count observations $Y_i$.
These modelling ideas also point to Bayesian nonparametrics,
cf.~our Remark B in Section \ref{section:concluding}. 

\subsection{Sum and proportions models.}

Suppose first in general terms that given 
$\theta=(\theta_1,\ldots,\theta_p)$, 
the observations $Y_1,\ldots,Y_p$ have independent 
Poisson distributions with these parameters, and 
that the $\theta$ has a background distribution,
which we for simplicity of presentation here take
to be symmetric with finite variances. 
Let us write 
$\E\,\theta_i=\theta_0$,
$\Var\,\theta_i=\sigma_0^2$, 
$\cov(\theta_i,\theta_j)=\rho\sigma_0^2$ for $i\not= j$. 
We then deduce 
\beq
\label{eq:extra11}
\E\,Y_i=\theta_0, \quad
\Var\,Y_i=\theta_0+\sigma_0^2, \quad
\cov(Y_i,Y_j)=\rho\sigma_0^2, \quad 
\corr(Y_i,Y_j)={\rho\sigma_0^2\over \theta_0+\sigma_0^2}.   
\eeq

A class of multivariate models for the $\theta_i$,
and by implication also for the $Y_i$,  
emerges from the construction of Lemma \ref{lemma:lemma1},
with a prior $q(\gamma)$ for $\gamma=\sumip\theta_i$
and a symmetric prior $\Dir(\alpha,\ldots,\alpha)$ 
for the proportions $\pi_i=\theta_i/\gamma$.
Write $\E\,\gamma=\gamma_0=p\theta_0$ and 
$\Var\,\gamma=p\tau_0^2$. The $\pi_i$ have means $1/p$, 
variances $(1/p)(1-1/p)/(p\alpha+1)$, and covariances
$-(1/p^2)/(p\alpha+1)$. From these facts we first find 
$\E\,\theta_i=\E\,(\gamma\pi_i)=\theta_0$ and then 
\beqn
\Var\,\theta_i
   &=&\E\,(\gamma\pi_i)^2-\theta_0^2
   =(\gamma_0^2+p\tau_0^2)
   \Bigl\{{1\over p^2}+{1\over p}\Bigl(1-{1\over p}\Bigr)
   {1\over p\alpha+1}\Bigr\}-\theta_0^2 
=\theta_0^2{p-1\over p\alpha+1}+\tau_0^2{\alpha+1\over p\alpha+1}. 
\eeqn 
Similarly, some calculations lead to 
$\cov(\theta_i,\theta_j)=(\tau_0^2\alpha-\theta_0^2)/(p\alpha+1)$, 
so that the correlation parameter $\rho$ of (\ref{eq:extra11})
may be expressed as 
\beqn
\rho={\alpha\tau_0^2-\theta_0^2\over (\alpha+1)\tau_0^2+(p-1)\theta_0^2}. 
\eeqn 
For the special case of $\gamma\sim\Gam(p\alpha,\beta)$, 
we have $\theta_0=\alpha/\beta$ and $\tau_0^2=\alpha/\beta^2$, 
the covariance is zero, and the formula for 
$\Var\,\theta_i$ gives $\alpha/\beta^2$. This is indeed the 
familiar case of independent $\theta_i\sim\Gam(\alpha,\beta)$.

For other models for $\gamma$, however, 
the construction above leads to useful multivariate models 
for $(\theta_1,\ldots,\theta_p)$ and for $(Y_1,\ldots,Y_p)$. 
In general terms, if $\gamma$ has density $q(\gamma)$, 
the distribution of the data vector 
can be written as 
\beq
\label{eq:extra13}
\begin{array}{rcl}
f(y_1,\ldots,y_p)
&=&\displaystyle
\E\,\prod_{i=1}^p \exp(-\theta_i)\theta_i^{y_i}/y_i! 
=\E\,\exp(-\gamma)\gamma^z\pi_1^{y_1}\cdots\pi_p^{y_p}/(y_1!\cdots y_p!) \\ 
&=&\displaystyle
 K(z) {\Gamma(p\alpha)\over \Gamma(\alpha)\cdots\Gamma(\alpha)}
   {\Gamma(\alpha+y_1)\cdots\Gamma(\alpha+y_p)\over \Gamma(p\alpha +z)}
   {1\over y_1!\cdots y_p!},
\end{array}
\eeq  
with $K(z)=\int_0^\infty \exp(-\gamma)\gamma^z q(\gamma)\,\dd\gamma$
as in Section \ref{section:Bayes}. 
If in particular $\gamma\sim\Gam(\alpha_0,\beta_0)$, then 
\beqn
f(y_1,\ldots,y_p)={\Gamma(\alpha_0+z)\over \Gamma(p\alpha+z)}
   {\Gamma(p\alpha)\over \Gamma(\alpha_0)}
   {\Gamma(\alpha+y_1)\over \Gamma(\alpha)}
   \cdots {\Gamma(\alpha+y_p)\over \Gamma(\alpha)}
   {\beta_0^{\alpha_0}\over (\beta_0+1)^{\alpha_0+z}}
   {1\over y_1!\cdots y_p!}. 
\eeqn 
The point is that this has a multiplicative independence structure 
only if $\alpha_0=p\alpha$, then with negative binomial
marginal distributions. In this light, the (\ref{eq:extra13}) 
construction amounts to an extended class of models 
for count data, allowing both positive and negative correlations.
The generalisation to the nonsymmetric case 
takes $\pi\sim\Dir(\alpha_1,\ldots,\alpha_p)$,
and leads to 
\beqn
f(y_1,\ldots,y_p)
   =K(z){\Gamma(\alpha_1+\cdots+\alpha_p)\over 
   \Gamma(\alpha_1)\cdots\Gamma(\alpha_p)}
   {\Gamma(\alpha_1+y_1)\cdots\Gamma(\alpha_p+y_p)
   \over \Gamma(\alpha_1+\cdots+\alpha_p+z)}{1\over y_1!\cdots y_p!},  
\eeqn 
with the particular choice $\gamma\sim\Gam(\sumip\alpha_i,\beta)$
yielding independent negative binomials. 


\subsection{Poisson processes with dependence in time and space.}

Suppose independent Poisson processes $Y_1(t),\ldots,Y_k(t)$
are observed over a time period $[0,\tau]$, 
and divide this period into $p$ cells or windows, 
say $(t_{j-1},t_j]$. We take $Y_i(t)$ to have cumulative 
intensity function $G_i(t)$, with consequent Poisson parameters 
$\theta_{i,j}=G_i(t_j)-G_i(t_{j-1})$ for the counts 
$Y_{i,j}=Y_i(t_j)-Y_i(t_{j-1})$,  for $j=1,\ldots,p$. 
There are now different ways of modelling the 
$k\times p$ matrix of rate parameters, using aspects
of the apparatus above. For simplicity of presentation
we limit these brief pointers to the neutral cases,
where the vectors of fractions involved come from 
symmetric Dirichlet distributions.  

{\it Idea (a)} is to allow for dependence over time, for each process: 
\beqn
\theta_{i,j}=\gamma_i \pi_{i,j} 
   \quad {\rm for\ }j=1,\ldots,p, 
   {\rm\ with\ }\gamma_i=\sum_{j=1}^p \theta_{i,j},
\eeqn 
and with $(\pi_{i,1},\ldots,\pi_{i,p})$ from a 
$\Dir(\alpha_i,\ldots,\alpha_i)$.
The particular case of $\gamma_i\sim\Gam(p\alpha_i,\beta_i)$
corresponds to independent $\theta_{i,j}\sim\Gam(\alpha_i,\beta_i)$
for $j=1,\ldots,p$.  
{\it Idea (b)} is to build dependence structure 
into the sequence of processes: 
\beqn
\theta_{i,j}=\kappa_j\pi_{i,j} 
   \quad {\rm for\ }i=1,\ldots,k, 
   \quad {\rm\ with\ }\kappa_j=\sum_{i=1}^k \theta_{i,j}, 
\eeqn 
and with $(\pi_{1,j},\ldots,\pi_{k,j})$ from a 
$\Dir(\alpha_j,\ldots,\alpha_j)$. If in particular 
$\kappa_j\sim\Gam(k\alpha_j,\beta_j)$, then we have 
independent $\theta_{i,j}\sim\Gam(\alpha_j,\beta_j)$
for $i=1,\ldots,p$. 

Both ideas (a) and (b) have Bayesian counterparts, and 
motivate extensions of the $L_c$ loss function. 
Suppose we are interested in precise estimates 
of the full $k\times p$ parameter matrix and 
in the cumulatives $\gamma_i = G_i(\tau)$. 
A natural loss function is then 
\beqn
L_c^*(\theta,\delta) = \sum_{i=1}^k L_{c}(\theta_i,\delta_i)
= \sum_{i=1}^k\Big\{\sum_{j=1}^p(\delta_{i,j} - \theta_{i,j})^2/\theta_{i,j} + 
c\big(\sum_{j=1}^p \delta_{i,j} - \gamma_i\big)^2/\gamma_i \Big\},
\eeqn 
where $L_c$ is the loss function introduced in 
Section \ref{section:closs}, and $c$ is a positive constant 
set by the statistician. The Bayes solution is 
\beqn
\delta_{i,j}^B = \frac{1+c}{1+ca_i/b_i }\,a_{i,j}  
   \quad\text{for}\quad j=1,\ldots,p,\;i=1,\ldots,k, 
\eeqn 
where $a_{i,j} = \{\E\,(\theta_{i,j}^{-1}\mid y)\}^{-1}$, 
$a_i = \sum_{j=1}^pa_{i,j}$, and $b_i = \{\E\,(\gamma_i^{-1}\mid y)\}^{-1}$. 
Suppose prior knowledge dictates that intensities of the 
$k$ processes are functionally somewhat alike, 
but at different levels; then a natural prior construction takes 
$(\pi_{i,1},\ldots,\pi_{i,p}) \equiv 
   (\pi_{1},\ldots,\pi_{p}) \sim \Dir(\alpha_1,\ldots,\alpha_p)$ 
(i.e., one draw for all $i$) and 
$\gamma_i \sim{\rm Gamma}(\alpha_{0,i},\beta_{0,i})$ 
for $i=1,\ldots,k$, with these being independent and also 
independent of $(\pi_1,\ldots,\pi_p)$. The posterior is then 
\beqn
\begin{split}
(\pi_1,\ldots,\pi_p)\mid{\rm data} 
   &\sim {\rm Dir}(\alpha_1 + Z_1,\ldots,\alpha_p + Z_p),\\
\gamma_i\mid{\rm data} 
   & \sim \Gam(\alpha_{0,i} + Y_i(\tau),\beta_{0,i} + 1),
   \quad\text{independent for}\quad i = 1,\ldots,k, 
\end{split}
\notag
\eeqn 
where $Z_j = \sum_{i=1}^k \{Y_{i}(t_j) - Y_{i}(t_{j-1})\}$ 
for $j=1,\ldots,p$. The Bayes estimator that emerges is 
\beq
\label{eq:3directions_estimatorA}
\delta_{i,j}^B = \frac{(1+c)(p\bar{\alpha} + Z_j - 1)}{(1+c)(p\bar{\alpha} 
   + Z - 1) - c(p-1)}\frac{a_{0,i} + Y_i(\tau) - 1}{\beta_{0,i} + 1}, 
\eeq 
for $j=1,\ldots,p$ and $i=1,\ldots,k$,
where $\bar\alpha = (1/p)\sum_{j=1}^p \alpha_j$
and $Z$ is the total sum $\sum_{j=1}^p Z_j$.  
This and the accompanying natural frequentist estimator
\beq
\label{eq:3directions_estimatorB}
\hatt\theta_{i,j} 
= \frac{(1+c)(p-1+ Z_j)}{(1+c)(p + Z - 1) - c(p-1)} Y_{i,j} 
= \frac{(1+c)(p-1+ Z_j)}{p-1+(1+c)Z} Y_{i,j} 
\eeq 
are interesting because they borrow information 
in all directions, so to speak; 
cross-sectionally through $Z_j$; 
in time through $Y_i(\tau)$; and 
both horizontally and vertically via the total sum $Z$.
From Section \ref{section:closs} we know that if the two parts of 
$L_c^*$ are viewed separately, the estimators 
\beqn
\delta_{i,j}^{\prime} = \Bigl( 1 - \frac{kp - 1}{kp - 1 + Z}\Bigr)Y_{i,j}
   \quad \text{and}\quad 
\delta_{i}^{\prime}= \Bigl( 1 - \frac{k - 1}{k - 1 + Z}\Bigr)Y_{i}(\tau) 
\eeqn 
uniformly dominate $Y_{i,j}$ and $Y_i(\tau)$, under the loss functions 
$\sum_{i=1}^k\sum_{j=1}^p (\delta_{i,j} - \theta_{i,j})^2/\theta_{i,j}$ 
and $\sum_{i=1}^k(\delta_i - \gamma_i)^2/\gamma_i$, respectively.
The estimators 
(\ref{eq:3directions_estimatorA})--(\ref{eq:3directions_estimatorB})
provide guidance on how to exploit the multivariate nature 
of the problem in order to compromise between $\delta_{i,j}^{\prime}$ 
and $\delta_{i}^{\prime}$, and thereby achieve risk dominance 
in large parts of the parameter space, or even uniformly.

\section{Concluding remarks} 
\label{section:concluding}

We round off our paper by offering a list of 
concluding remarks, some pointing to further research. 

{\it A. Normal approximations and the square-root transformation.}
When the $\theta_i$ are likely to not being small, 
normal approximations might work well, 
and multiparameter estimation may proceed via
e.g.~the approximate model $2y_i^{1/2}\sim\N(2\theta_i^{1/2},1)$. 
The point is that there is a voluminous literature on shrinkage 
methods for normal setups, with Stein--James 
estimators etc. This also invites loss functions 
of the type $\sumip (\delta_i^{1/2}-\theta_i^{1/2})^2$, 
which may also be motivated via the Hellinger distance
between the real and estimated Poisson distributions.
Methods of our paper, using exact Poisson calculations
as opposed to normal approximations, may indeed be used 
to demonstrate that estimators of the type 
$\hatt\theta_i^{1/2}=\{1-g(z)\}y_i^{1/2}$ may be found,
which dominate the default method's $y_i^{1/2}$
in large parts of the parameter space. Specifically,
if $p\ge2$, and given a lower positive threshold $\pi_0$,
methods exist which dominate the default method 
in the region characterised by having all 
$\pi_i=\theta_i/\gamma\ge\pi_0$.

{\it B. Bayesian nonparametrics.}
Suppose a time-inhomogeneous Poisson process 
$Y=\{Y(t)\colon t\ge0\}$ is observed over some
time window $[0,\tau]$, with cumulative intensity
$G=\{G(t)\colon t\ge0\}$. We wish to estimate this
function, with the loss function 
\beqn
L(G,\hatt G)=\int_0^\tau \{\hatt G(t)-G(t)\}^2/G(t)\,w(\dd t),
\eeqn
with some fixed weight measure $w$. Note however
that this is not an obvious extension of our 
previous Clevenson--Zidek loss function, since it works 
via the cumulatives. A natural estimator is $Y$ itself, 
with constant risk function $r_0=\int_0^\tau w(\dd t)$. 
A natural class of priors takes $G\sim\Gam(aG_0,a)$, 
say, a Gamma process with 
independent increments and $G(t)\sim\Gam(aG_0(t),a)$. 
Then $G(t)\midd\data\sim\Gam(aG_0(t)+Y(t),a+1)$, and 
the Bayes estimator becomes 
\beqn
\hatt G(t)={1\over \E\,\{1/G(t)\midd\data\}}
   ={aG_0(t)+Y(t)-1\over a+1}. 
\eeqn 
Working with expressions for the minimum Bayes risk 
one may show that this converges as $a\arr0$
to the value $r_0$, proving that the estimator $Y$
is minimax. 

A larger class of priors can however also be investigated,
inspired by methods and results of our Section \ref{section:Bayes}.
Write $G=\gamma F$, with $\gamma=G(\tau)$ the full mass
and $F=G/\gamma$ normalised to be a cumulative distribution
function. Now construct a prior by having $\gamma$
from some density $q(\gamma)$ independent of 
a Dirichlet process for $F$, say $F\sim\Dir(bF_0)$,
i.e.~prior mean $F_0$ and $b$ the mass or precision parameter.  
An appropriate extension of our Lemma \ref{lemma:lemma1} 
then shows that (i) $\gamma$ and $F$ are independent,
given data; (ii) $\gamma\midd\data$ has a density
proportional to $q(\gamma)\gamma^z\exp(-\gamma)$,
with $z=Y(\tau)$; and (iii) $F\midd\data$ is 
a Dirichlet $bF_0+Y$. The Bayes estimator becomes 
\beqn
G^*(t)=\{\E\,(\gamma^{-1}\midd\data)\}^{-1}
   [\E\,\{F(t)^{-1}\midd\data\}]^{-1}
   ={K(z)\over K(z-1)}{bF_0(t)+Y(t)-1\over b+Y(\tau)-1},  
\eeqn
with $K(z) = \int_0^{\infty} \gamma^z e^{-\gamma}q(\gamma)\,\dd\gamma$
as in Section \ref{section:Bayes}. 
Note that this construction provides a genuine
extension of the Gamma process, and still with 
easy control of the posterior distribution for 
$G=\gamma F$. The usual Gamma process, 
with $G(t)\sim\Gam(aG_0(t),a)$, corresponds to the 
special case of $G=\gamma F$, where 
$\gamma$ has a $\Gam(aG_0(\tau),a)$ and is 
independent of $F\sim\Dir(aG_0)$, i.e.~with 
prior mean $F_0=G_0/G_0(\tau)$ and mass parameter $aG_0(\tau)$. 

{\it C. Separating sum and proportions.} 
When estimating Poisson parameters $\theta_1,\ldots,\allowbreak\theta_p$,
another type of loss function than those worked
with above is to separate the sum $\gamma$ 
and proportions $\pi_i=\theta_i/\gamma$, and 
then work with 
$(\hatt\gamma-\gamma)^2/\gamma
   +z\sumip(\hatt\pi_i-\pi_i)^2/\pi_i$. 
The maximum likelihood procedure corresponds to 
$\hatt\pi_i=Y_i/Z$ and $\hatt\gamma=Z$, 
having constant risk function $1+(p-1)=p$.
Procedures achieving lower risk in large 
parameter regions may be constructed via 
empirical Bayes arguments. 

{\it D. Shrinking towards submodels.}
There is scope for extension of our methods and 
constructions in several directions for multiparameter 
Poisson- and Poisson-related inference. 
It is inherently useful to shrink raw estimates
towards meaningful submodels, such as with 
log-linear setups for analysis of tables 
of count-data, see e.g.~\citet{Agresti2019}.
There are notable Poisson-related models 
for small-area estimation, involving also
mixed Poisson models, zero-inflated versions, etc.
Generally speaking Bayes and empirical Bayes 
constructions can be brought to such tables,
and will tend to work well for sizeable 
parameter regions, whereas the exact risk calculations
worked with in the present article are harder
to generalise.   

\section*{Acknowledgments} 

The authors are grateful for partial funding 
from the PharmaTox Strategic Research Initiative 
at the Faculty of Mathematics and Natural Sciences
(for E.Aa.S's PhD work), and from the Norwegian Research 
Council for the five-year research project group 
FocuStat: Focus Driven Statistical Inference with Complex Data,  
at the Department of Mathematics (led by N.L.H.), 
both at University of Oslo. 


\appendix
\section{Appendix}
\label{app:A}
Here we prove the risk function expression in (\ref{eq:risk.in.risk}),
needed in Theorem \ref{th:admissibility}.  
For the $L_c(\theta,\delta)$ loss function, 
and for estimators of the form 
\beqn
\delta_i = \frac{(1+c)y_i}{p-1+(1+c)z}\,\kappa(z), 
\eeqn
considered in (\ref{eq:symform}), we have 
\beq
\begin{split}
R(\delta,\theta) 
& = \E_{\theta}\Bigl\{ \sumip \frac{1}{\theta_i}(\delta_{i} - \theta_i)^2  
   + \frac{c}{\gamma} \Bigl(\sumip\delta_{i} - \gamma\Bigr)^2\Bigr\}  \\
& = \E_{\theta} \sumip \Bigl\{ \frac{(1+c)^2Y_i^2/\theta_i}
   {(p-1+(1+c)Z)^2}\kappa^2(Z)  
   - 2 \frac{(1+c)Y_i}{p-1+(1+c)Z}\kappa(Z) + \theta_i \Bigr\} \\
& \qquad + c\,\E_{\gamma}\Bigl[  \frac{(1+c)^2Z^2/\gamma}
   {\{p-1+(1+c)Z\}^2}\kappa^2(Z) 
   - 2 \frac{(1+c)Z}{p-1+(1+c)Z}\kappa(Z) + \gamma \Bigr] \\
& = \E_{\gamma}\left[ \frac{(1+c)^2}{\{p-1+(1+c)Z\}^2} 
   \frac{Z\{p-1+(1+c)Z\}}{\gamma}\kappa^2(Z) \right. \\
& \qquad\qquad\left. - 2\frac{(1+c)^2Z}{p-1+(1+c)Z}\kappa(Z) + (1+c)\gamma \right] \\
& = \E_{\gamma}\Bigl\{ \frac{(1+c)^2Z}{p-1+(1+c)Z} {\gamma}\kappa^2(Z)
- 2\frac{(1+c)^2Z}{p-1+(1+c)Z}\kappa(Z) + (1+c)\gamma \Bigr\}\\
& = \E_{\gamma} \Bigl[ \frac{(1+c)^2Z }{p-1+(1+c)Z}
   \frac{\{\kappa(Z) - \gamma\}^2}{\gamma} - \frac{(1+c)Z}{p-1+(1+c)Z} \gamma 
   + (1+c)\gamma \Bigr] \\ 
& = \E_{\gamma} \Bigl[ \frac{(1+c)^2Z }{p-1+(1+c)Z}
   \frac{\{\kappa(Z) - \gamma\}^2}{\gamma} 
   + \frac{(p-1)(1+c)\gamma}{p-1+(1+c)Z} \Bigr],
\end{split}
\notag
\eeq
proving what was required. 


\bibliographystyle{biometrika}
\bibliography{poisson_things3.bib} 


\end{document}